\documentclass[11pt]{article}
 \usepackage{amsfonts,amssymb,amsthm,amsmath,cite,bbm}

	\oddsidemargin 0.5cm \evensidemargin 0.5cm\textwidth 15.6cm \textheight 21cm

	\newcommand{\R}{\mathbb R} \newcommand{\N}{\mathbb{N}}
	\newcommand{\Rn}{\R^n}

\newcommand{\umin}{u_{\min}}

\newcommand{\Sph}{{\mathbb S}^{n-1}}

\newcommand{\e}{\varepsilon}

\newcommand{\conv}{\operatorname{conv}}
\newcommand{\elim}{\operatorname{epi-lim}}
\newcommand{\relint}{\operatorname{relint}}
\newcommand{\Int}{\operatorname{int}}
\newcommand{\dom}{\operatorname{dom}}
\newcommand{\epi}{\operatorname{epi}}
\newcommand{\pos}{\operatorname{pos}}

\newcommand{\Ind}{\mathrm{I}}

\newcommand{\infconv}{\mathbin{\Box}}

\newcommand{\eto}{\stackrel{epi}{\longrightarrow}}
		
\newcommand{\cK}{{\mathcal K}} \newcommand{\cP}{{\mathcal P}} \newcommand{\cL}{{\mathcal L}} 			\newcommand{\cS}{{\mathcal S}}

\newcommand{\oZ}{\operatorname{Z}}

\newcommand{\CV}{\operatorname{Conv}(\Rn)}
\newcommand{\CVpa}{\operatorname{Conv_{p.a.}}(\Rn)}
\newcommand{\C}[1]{\operatorname{Conv}(\R^{#1})}

 \renewcommand{\>}{\rangle}

\renewcommand{\l}{\ell}
\renewcommand{\d}{\,\mathrm{d}}

\newcommand\sln{\operatorname{SL}(n)}

	\newcommand{\eqnref}[1]{(\ref{#1})} 
	\newtheorem*{Theorem}{Theorem}
	\newtheorem{theorem}{Theorem}
	
	\newtheorem{lemma}[theorem]{Lemma}

\title{Valuations on Convex Functions}
\author{Andrea Colesanti,  Monika Ludwig and Fabian Mussnig}
\date{}

\begin{document}

\maketitle

\begin{abstract}
All continuous, $\sln$ and translation invariant valuations on the space of convex functions on $\R^n$ are completely classified.
\bigskip

{\noindent
2000 AMS subject classification: 26B25 (46A40, 52A20, 52B45).}
\end{abstract}

A function $\oZ$ defined on a lattice $(\cL,\vee, \wedge)$ and taking values in an abelian semigroup is called a \emph{valuation} if
\begin{equation}\label{valuation}
\oZ(u\vee v)+\oZ(u\wedge v)=\oZ(u) +\oZ (v)
\end{equation}
for all $u,v\in \cL$. 
A function $\oZ$ defined on some subset $\cS$ of $\cL$ is called a valuation on $\cS$ if 
(\ref{valuation}) holds whenever $u,v, u\vee v, u\wedge v\in \cS$. For $\cS$ the set of compact convex sets, $\cK^n$, in $\R^n$ with $\vee$ denoting union and $\wedge$ intersection, valuations have been studied since  Dehn's solution of Hilbert's Third Problem in 1901 and interesting new ones keep arising (see, for example, \cite{HLYZ_acta}). The natural topology on $\cK^n$ is induced by the Hausdorff metric and continuous, $\sln$ and translation invariant valuations on $\cK^n$ were first classified by Blaschke.  The cele\-brated Hadwiger classification theorem establishes a complete classification of continuous, rigid motion invariant valuations on  $\cK^n$ and provides a characterization of intrinsic volumes.
See  \cite{Alesker99,Alesker01,Bernig:Fu, Haberl_sln,  Ludwig:Reitzner2,  HaberlParapatits_moments, Haberl:Parapatits_centro, LiMa, AbardiaWannerer} for some recent results on valuations on convex sets and  \cite{Hadwiger:V,Klain:Rota}  for information on the classical theory. 

More recently, valuations have been studied on function spaces. Here $\cS$ is a space of real valued functions and $u\vee v$ is the pointwise maximum of $u$ and $v$ while $u\wedge v$ is the pointwise minimum. For Sobolev spaces \cite{Ludwig:SobVal, Ludwig:Fisher, Ma2016} and $L^p$ spaces \cite{Tsang:Minkowski, Tsang:Lp, Ludwig:MM} complete classifications for valuations intertwining the $\sln$ were established. See also \cite{Ludwig:survey, Kone,Tuo_Wang_semi, Ober2014}. Moreover,  classical functionals for convex sets including the intrinsic volumes have been extended to the space of quasi-concave functions in  \cite{BobkovColesantiFragala} and  \cite{MilmanRotem2013} (see also \cite{ColesantiFragala, KlartagMilman2005}). A classification of rigid motion invariant valuations on quasi-concave functions is established in \cite{ColesantiLombardi}. For definable functions such a result was previously established in \cite{BaryshnikovGhristWright}.

The aim of this paper is to establish a complete classification of  $\sln$ and translation invariant valuations on convex functions. Let $\CV$ denote the space of convex functions $u: \R^n \to (-\infty, +\infty]$ which are {proper}, lower semicontinuous and coercive. Here a function is \emph{proper} if it is not identically $+\infty$ and it is \emph{coercive} if 
\begin{equation}\label{eq:cn_coercive}
\lim_{\lvert x\rvert \to+ \infty} u(x)=+\infty
\end{equation}
where $\lvert x\rvert$ is the Euclidean norm of $x$.
The space $\CV$ is one of the standard spaces in convex analysis and it is equipped with the topology associated to epi-convergence (see Section \ref{space}).

Let $n\ge 2$ throughout the paper.  A functional $\oZ:\CV\to\R$ is $\sln$ \!\emph{invariant} if 
$\oZ(u\circ \phi^{-1})=\oZ(u)$
for every $u\in\CV$ and $\phi\in\sln$.
It is \emph{translation invariant} if 
$\oZ(u\circ \tau^{-1})=\oZ(u)$ 
for every $u\in\CV$ and translation $\tau:\R^n\to \R^n$. In \cite{CavallinaColesanti}, a class of rigid motion invariant valuations on $\CV$ was introduced and classification results were established. However, the setting is different from our setting, as a different topology (coming from a notion of monotone convergence) is used in \cite{CavallinaColesanti} and monotonicity of the valuations is assumed. Variants of the functionals from \cite{CavallinaColesanti} also appear in our classification. We say that a functional $\oZ:\CV\to\R$ is \emph{continuous} if $\oZ(u)= \lim_{k\to \infty} \oZ(u_k)$ for every sequence $u_k\in\CV$ that epi-converges to $u\in\CV$.

\begin{Theorem}
A functional $\,\oZ :\CV\to [0,\infty)$ is a continuous, $\sln\!$ and translation invariant valuation if and only if there exist 
 a continuous function $\,\zeta_0: \R \to [0,\infty)$  and  a continuous function $\,\zeta_n: \R \to [0,\infty)$  with finite $(n-1)$-st moment  such that 
\begin{equation}\label{eq:the_non_neg_sln_inv_vals}
\oZ (u) = \zeta_0\big(\min\nolimits_{x\in\Rn}u(x)\big) + \int_{\dom u} \zeta_n\big(u(x)\big)\,dx
\end{equation}
for every $u\in\CV$. 
\end{Theorem}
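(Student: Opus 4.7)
The plan is to handle sufficiency and necessity separately. For sufficiency, both terms in the right-hand side are manifestly $\sln$ and translation invariant: $\min u$ is invariant under affine reparameterization of the domain, and the integral is translation invariant with $\sln$ preserving Lebesgue measure. The valuation property for the minimum term reduces to verifying $\min(u\vee v)+\min(u\wedge v)=\min u+\min v$ whenever all four functions lie in $\CV$, which rests on the structural fact that convexity of $u\wedge v$ forces a compatible ordering between the two functions. The valuation property for the integral term follows from the pointwise identity $\zeta_n(\max(a,b))+\zeta_n(\min(a,b))=\zeta_n(a)+\zeta_n(b)$. Epi-continuity uses the finite $(n-1)$-st moment of $\zeta_n$ together with polar coordinates and coercivity of $u$ to control the integral.

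For necessity, the first key step is to evaluate $\oZ$ on the ``cylinder'' functions $I_K^t$ defined by $I_K^t(x)=t$ for $x\in K\in\cK^n$ and $I_K^t(x)=+\infty$ otherwise. By translation invariance, $\zeta_0(t):=\oZ(I_{\{0\}}^t)$ is well-defined, and it is continuous in $t$ since $t\mapsto I_{\{0\}}^t$ is epi-continuous. Observing that $I_K^t\vee I_L^t=I_{K\cap L}^t$ and $I_K^t\wedge I_L^t=I_{K\cup L}^t$ (with the latter lying in $\CV$ only when $K\cup L$ is convex) shows that $K\mapsto\oZ(I_K^t)$ is a continuous, $\sln$ and translation invariant valuation on $\cK^n$. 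Blaschke's classification then gives $\oZ(I_K^t)=\zeta_0(t)+\zeta_n(t)\,V_n(K)$ for a continuous function $\zeta_n\ge 0$.

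The main obstacle is extending this formula from cylinder functions to arbitrary $u\in\CV$. The strategy is to use density of piecewise affine convex functions in $\CV$ under epi-convergence to reduce to the piecewise affine case, and then to decompose such a $u$ via the valuation property along its sublevel sets $K_s=\{u\le s\}$. Because the valuation axiom applies only when $u\vee v$ and $u\wedge v$ both lie in $\CV$, admissible decompositions are delicate: one must choose auxiliary functions $v$ (such as level-set truncations of $u$) so that convexity of the meet is preserved, probably by induction on the combinatorial complexity of $u$. A telescoping argument over values $s_0<s_1<\cdots<s_N$ passing to a Riemann-type limit should yield
\[
\oZ(u)=\zeta_0(\min u)+\int_{\dom u}\zeta_n(u(x))\,dx
\]
for piecewise affine $u$, after converting the level-set sum to an integral via a layer-cake identity. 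Finally, the moment condition $\int_0^\infty r^{n-1}\zeta_n(r)\,dr<\infty$ is forced by evaluating $\oZ$ on prototypical functions with unbounded coercive behavior such as $u(x)=|x|$, and the full statement follows by epi-density and continuity of both $\oZ$ and the right-hand side functional.
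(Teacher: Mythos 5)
The sufficiency half of your plan matches the paper's, and your identification of the values on indicator (``cylinder'') functions, $\oZ(\Ind_K+t)=\zeta_0(t)+\zeta_n(t)V_n(K)$, is legitimate. The genuine gap is the central step of the necessity direction: passing from these values to $\oZ(u)$ for general (or piecewise affine) $u$ by ``decomposing along sublevel sets $K_s=\{u\le s\}$'' and telescoping over levels $s_0<\dots<s_N$. This cannot work as sketched, because the horizontal cuts it requires leave the space $\CV$: the truncation $\min(u,s)$ of a convex function at a level is in general not convex (already $\min(|x|,1)$ fails), and a finite ``stack of cylinders'' $\bigwedge_i(\Ind_{K_{s_i}}+s_i)$ is a non-convex step function, so the valuation identity is simply not applicable to these pairs; moreover the only admissible pairs of the form $u$, $\Ind_{K_s}+s$ give trivial identities ($u\wedge(\Ind_{K_s}+s)=u$, $u\vee(\Ind_{K_s}+s)=\Ind_{K_s}+s$). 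The paper's proof shows that the link between indicator data and the valuation is genuinely of higher order: it first proves (Lemma \ref{le:reduction}, an induction on the facet hyperplanes of $\epi u$ using \emph{vertical} cone decompositions $u\wedge\l=\bar u$, $u\vee\l=\bar\l$ with cone functions $\l_P+t$, following the Sobolev-space argument of \cite{Ludwig:SobVal}) that $\oZ$ is determined by its values $\psi_0(t)+\psi_n(t)V_n(P)$ on cone functions, and then proves the nontrivial relation $\zeta_n=\tfrac{(-1)^n}{n!}\psi_n^{(n)}$ (Lemma \ref{le:diff_grwth_fncts}) together with $\psi_n(t)\to 0$ (Lemma \ref{le:sln_grwth_infty}), so that recovering the valuation from $\zeta_n$ amounts to an $n$-fold integration, not a one-dimensional layer-cake limit. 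None of this machinery is replaced by anything concrete in your sketch; ``induction on combinatorial complexity'' and ``Riemann-type limit'' are precisely the points where a real construction of admissible lattice decompositions is needed.

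A secondary issue: you derive the finite $(n-1)$-st moment of $\zeta_n$ ``by evaluating $\oZ$ on $u(x)=|x|$,'' but at that stage of your argument you do not yet know that $\oZ$ has the integral representation, so this is circular as stated. (It could be repaired \emph{after} the formula is known on cone functions $\l_P$, since finiteness of $\oZ(\l_P)$ then forces $\int_0^\infty t^{n-1}\zeta_n(t)\,\d t<\infty$; the paper instead obtains the moment bound structurally from $\zeta_n=\tfrac{(-1)^n}{n!}\psi_n^{(n)}$, $\psi_n\to0$ and $\zeta_n\ge0$ in Lemma \ref{le:derivative_has_finite_moment}.) But this repair is contingent on filling the main gap above.
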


\noindent
Here, a function $\zeta:\R \to [0,\infty)$ has finite $(n-1)$-st moment if $\int_0^{+\infty} t^{n-1} \zeta(t)\d t<+\infty$ and $\dom u$ is the domain of $u$, that is, $\dom u=\{x\in\R^n: u(x)<+\infty\}$. Since $u\in\CV$, the minimum of $u$ is attained and hence finite.

If the valuation in (\ref{eq:the_non_neg_sln_inv_vals}) is evaluated for a  (convex) indicator function $\Ind_K$ for $K\in\cK^n$ (where $\Ind_K(x)=0$ for $x\in K$ and $\Ind_K(x)= +\infty$ for $x\not\in K$), then 
$\zeta_0(0) V_0(K)  + \zeta_n(0)\, V_n(K)$ is obtained, where $V_0(K)$ is the Euler characteristic and $V_n(K)$ the $n$-dimensional volume of $K$. The proof of the theorem makes essential use of
 the following classification of continuous and $\sln$ invariant valuations on $\cP^n_0$, the space of convex polytopes which contain the origin. A functional $\oZ:\cP^n_0\to\R$ is a continuous and $\sln$ invariant valuation if and only if there are constants $c_0,c_n\in\R$ such that
\begin{equation}\label{thm:ludwig_reitzner}
\oZ(P)=c_0 V_0(P)+ c_n V_n(P)
\end{equation}
for every $P\in\cP^n_0$ (see, for example, \cite{LudwigReitzner_AP}).  For continuous and rotation invariant  valuations on $\cK^n$ that have polynomial behavior with respect to translations, a classification was established by Alesker \cite{Alesker99} but a classification of continuous and  rotation invariant valuations on $\cP_0^n$ is not known. 
It is also an open problem to establish a classification of continuous and rigid motion invariant valuations on $\CV$.

\section{The Space of Convex  Functions}\label{space}

We collect some properties of convex functions and of the space $\CV$. A basic reference is the book by Rockafellar \& Wets \cite{RockafellarWets} (see also \cite{dal_maso, Attouch}).
In particular, epi-convergence is discussed and  some properties of epi-convergent sequences of convex functions are established. For these results, conjugate functions are introduced. We also discuss piecewise affine functions and give a self-contained proof that they are dense in $\CV$. 

To every convex function $u:\Rn\to (-\infty,+\infty]$, there can be assigned several convex sets. For  $t\in(-\infty,+\infty]$, the \emph{sublevel sets}
\begin{equation*}
\{u<t\}=\{x\in\Rn:u(x)<t\},\quad \{u\leq t\}=\{x\in\Rn:u(x)\leq t\},
\end{equation*}
are convex. The domain, $\dom u$, of $u$ is convex and the \emph{epigraph} of $u$,
\begin{equation*}
\epi u =\{(x,y)\in\Rn\times\R: u(x)\leq y\},
\end{equation*}
is a convex subset of $\Rn\times\R$.

The lower semicontinuity of a convex function $u:\Rn\to(-\infty,+\infty]$ is equivalent to its epigraph being closed and to all sublevel sets, $\{u\leq t\}$, being closed. Such functions are also called \emph{closed}. The growth condition (\ref{eq:cn_coercive}) is equivalent to the boundedness of all sublevel sets $\{u\leq t\}$. Hence, $\{u\leq t\} \in \cK^n$ for $u\in\CV$ and $t\geq \min_{x\in\Rn} u(x)$.

For convex functions $u,v\in\CV$, the pointwise minimum $u\wedge v$ corresponds to the union of their epigraphs and therefore to the union of their sublevel sets. Similarly, the pointwise maximum $u\vee v$ corresponds to the intersection of the epigraphs and sublevel sets. Hence for all $t\in\R$
\begin{equation*}
\label{eq:lvl_sts_val}
\{u\wedge v \leq t\} = \{u\leq t\} \cup \{v\leq t\},\qquad \{u\vee v\leq t\}= \{u\leq t\} \cap \{v\leq t\},
\end{equation*}
where for $u\vee v\in\CV$ all sublevel sets are either empty or in $\cK^n$.
For $u\in\CV$, 
\begin{equation}\label{le:level_sets}\relint \{u\leq t\} \subseteq \{u<t\}\end{equation}for every $t>\min_{x\in\Rn} u(x)$, where $\relint$ is the relative interior (see \cite[Lemma 3.2]{CavallinaColesanti}).

\subsection{Epi-convergence}

A sequence $u_k: \Rn\to (-\infty, \infty]$ is \emph{epi-convergent} to $u:\Rn\to (-\infty, \infty]$ if for all $x\in\Rn$ the following conditions hold:
\begin{itemize}
	\item[(i)] For every sequence $x_k$ that converges to $x$,
			\begin{equation}\label{eq:gc_inf}
				u(x) \leq \liminf_{k\to \infty} u_k(x_k).
			\end{equation}
	\item[(ii)] There exists a sequence $x_k$ that converges to $x$ such that
			\begin{equation}\label{eq:gc_sup}
				u(x) = \lim_{k\to\infty} u_k(x_k).
			\end{equation}
\end{itemize}
In this case we also write $u=\elim_{k\to\infty} u_k$ and $u_k \eto u$.

\goodbreak

Equation (\ref{eq:gc_inf}) means, that $u$ is an asymptotic common lower bound for the sequence $u_k$. Consequently, (\ref{eq:gc_sup}) states that this bound is optimal.
The name epi-convergence is due to the fact, that this convergence is equivalent to the convergence of the corresponding epigraphs in the Painlev\'e-Kuratowski sense.
Another name for epi-convergence is  $\Gamma$-convergence (see \cite[Theorem 4.16]{dal_maso} and \cite[Proposition~7.2]{RockafellarWets}). We consider $\CV$ with the topology associated to epi-convergence.

\goodbreak
Immediately from the definition of epi-convergence we get the following result (see, for example, \cite[Proposition 6.1.]{dal_maso}).

\begin{lemma}
\label{le:g_conv_subseq}
If $u_k:\Rn\to(-\infty, \infty]$ is a sequence that epi-converges to $u:\Rn\to(-\infty, \infty]$, then also every subsequence $u_{k_i}$ of $u_k$ epi-converges to $u$.
\end{lemma}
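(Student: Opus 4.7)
The plan is to verify the two defining conditions (i) and (ii) of epi-convergence directly for the subsequence $u_{k_i}$ at an arbitrary point $x\in\Rn$.

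For condition (ii) the argument is essentially automatic. Since $u_k \eto u$, there is a sequence $x_k \to x$ with $u_k(x_k) \to u(x)$. Taking the subsequence $x_{k_i}$ gives $x_{k_i} \to x$ and $u_{k_i}(x_{k_i}) \to u(x)$, since any subsequence of a convergent sequence in $(-\infty,+\infty]$ has the same limit.

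For condition (i), I would take an arbitrary sequence $y_i \to x$ and extend it to a full sequence $z_k$ indexed by all $k\in\N$ by setting $z_{k_i} := y_i$ for indices belonging to the subsequence and $z_k := x$ for every remaining index $k$. Then $z_k \to x$ in $\Rn$, because all but finitely many terms either equal $x$ or belong to the tail of the convergent sequence $y_i$. Applying the hypothesis $u_k \eto u$ to $z_k$ yields
\begin{equation*}
u(x) \le \liminf_{k\to\infty} u_k(z_k).
\end{equation*}
Since the liminf along a subsequence is always bounded below by the liminf along the full sequence (in $(-\infty,+\infty]$), this gives
\begin{equation*}
u(x) \le \liminf_{k\to\infty} u_k(z_k) \le \liminf_{i\to\infty} u_{k_i}(z_{k_i}) = \liminf_{i\to\infty} u_{k_i}(y_i),
\end{equation*}
which is precisely condition (i) for the subsequence.

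There is no real obstacle here; the only mild point of care is the construction of the extension $z_k$, which must be a genuine sequence in $\Rn$ converging to $x$. Filling the non-subsequence slots with the constant value $x$ handles this cleanly and preserves the subsequence along which we want the estimate. Once both conditions are verified, the definition of epi-convergence gives $u_{k_i} \eto u$, as claimed.
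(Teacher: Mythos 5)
Your proof is correct, and it is exactly the direct verification from the definition that the paper has in mind (the paper omits the argument, calling it immediate and citing Dal Maso, Proposition 6.1). The subsequence recovery sequence for (ii) and the constant-filler extension plus the fact that the liminf along a subsequence dominates the liminf along the full sequence for (i) are precisely the standard steps.
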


For the following result, see, for example, \cite[Proposition 7.4 and Theorem 7.17]{RockafellarWets}.

\begin{lemma}\label{thm:g_comp}
If $u_k$ is a sequence of convex functions that epi-converges to a function $u$, then $u$ is convex and lower semicontinuous. Moreover, if $\dom u$ has non-empty interior, then $u_k$ converges uniformly to $u$ on every compact set that does not contain a boundary point of $\dom u$. 
\end{lemma}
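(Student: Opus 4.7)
The plan is to verify convexity, lower semicontinuity, and uniform convergence separately, using the two defining conditions of epi-convergence together with the local Lipschitz regularity of finite convex functions on the interior of their domain.

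\textbf{Convexity and lower semicontinuity.} Fix $x,y\in\Rn$ and $\lambda\in[0,1]$; we may assume $u(x),u(y)<+\infty$. Condition (ii) supplies recovery sequences $x_k\to x$, $y_k\to y$ with $u_k(x_k)\to u(x)$ and $u_k(y_k)\to u(y)$. Applying (i) along $z_k:=\lambda x_k+(1-\lambda)y_k$ together with convexity of each $u_k$ gives
\[
u(\lambda x+(1-\lambda)y)\leq \liminf_{k\to\infty} u_k(z_k) \leq \lambda u(x)+(1-\lambda)u(y).
\]
For lower semicontinuity, given $y_j\to y$ with $u(y_j)\to L:=\liminf_j u(y_j)$, (ii) applied at each $y_j$ combined with a diagonal extraction produces a sequence $w_k\to y$ with $\liminf_k u_k(w_k)\leq L$; then (i) forces $u(y)\leq L$.

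\textbf{Uniform convergence.} Let $K$ be a compact set disjoint from $\partial(\dom u)$. It decomposes into compact pieces $K_1\subset\Int\dom u$ and $K_2\subset\Rn\setminus\overline{\dom u}$. On $K_2$, $u\equiv+\infty$; a failure of $u_k\to+\infty$ uniformly would yield a subsequence $z_{k_j}\to z\in K_2$ with $u_{k_j}(z_{k_j})\leq M$, and embedding this subsequence into a sequence tending to $z$ and applying (i) gives the contradiction $u(z)\leq M$. On $K_1$, the strategy is to establish local equi-boundedness of $\{u_k\}$ on a small enlargement of $K_1$ and then invoke the classical fact that convex functions uniformly bounded above on an open convex set are uniformly Lipschitz on compact subsets. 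For the upper bound, fix a compact $K'$ with $K_1\subset\Int K'\subset K'\subset\Int\dom u$, cover $K'$ by finitely many simplices with vertices $v_1,\dots,v_N\in\Int\dom u$, and use (ii) to produce $v_i^{(k)}\to v_i$ with $u_k(v_i^{(k)})\to u(v_i)<+\infty$; after a slight inward retraction so that $K'\subset\conv\{v_i^{(k)}\}$ for all large $k$, convexity of $u_k$ bounds $u_k$ above on $K'$ uniformly in $k$. For the matching lower bound, any affine minorant $\ell\leq u$ satisfies $u_k\geq\ell-1$ on $K'$ eventually, since a violating subsequence together with (i) would yield $u(z)\leq \ell(z)-1$ at its limit point.

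With local equi-Lipschitz bounds on $K_1$ in hand, Arzel\`a--Ascoli extracts from every subsequence of $\{u_k|_{K_1}\}$ a uniformly convergent sub-subsequence, and conditions (i) and (ii) identify each such limit with $u$, forcing uniform convergence of the whole sequence to $u$ on $K_1$. The main obstacle is the uniform upper bound on the enlargement of $K_1$: condition (ii) only produces convergence at moving points $v_i^{(k)}\to v_i$ rather than at fixed vertices, so the covering simplices must be chosen robustly enough that the perturbed vertices still enclose $K'$ for all large $k$. Once this inward-retraction device is set up, everything reduces to standard local regularity of finite convex functions.
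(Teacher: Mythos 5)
Your argument is correct, but there is nothing in the paper to compare it with step by step: the authors do not prove this lemma, they quote it from Rockafellar and Wets \cite{RockafellarWets} (Proposition 7.4 and Theorem 7.17). Your self-contained route — recovery sequences plus the liminf inequality for convexity and lower semicontinuity; splitting a compact set avoiding $\partial(\dom u)$ into a compact piece $K_1\subset\Int\dom u$ and a compact piece $K_2\subset\Rn\setminus\overline{\dom u}$; uniform divergence on $K_2$ via condition (i); equi-boundedness of the $u_k$ on an enlarged compact set (perturbed vertices from (ii) for the upper bound, an affine minorant of $u$ for the lower bound), hence uniform Lipschitz constants and identification of the limit — is essentially the standard textbook proof, and every step can be completed. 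Two points are worth writing out if you include this: the affine minorant exists because $u$ is proper in this setting ($u$ is $(-\infty,+\infty]$-valued and $\dom u$ has interior points, so a subgradient at an interior point gives a global minorant), and the ``inward retraction'' is made precise by choosing a polytope $P=\conv\{v_1,\dots,v_N\}\subset\Int\dom u$ with $K'+\eta B\subset P$; then $\lvert v_i^{(k)}-v_i\rvert\le\eta$ gives, comparing support functions, $h_{K'}\le h_P-\eta\le h_{\conv\{v_i^{(k)}\}}$ and hence $K'\subset\conv\{v_i^{(k)}\}$. Also, once the equi-Lipschitz bound is in place, conditions (i) and (ii) already yield pointwise convergence on $\Int\dom u$, and pointwise convergence plus equicontinuity gives uniform convergence on $K_1$ directly, so the Arzel\`a--Ascoli extraction can be bypassed. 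What the citation buys the paper is brevity; what your proof buys is a self-contained elementary argument in the same spirit as the proof the authors do spell out for Lemma \ref{le:hd_conv_lvl_sets}.
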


We also require the following connection to pointwise convergence (see, for example, \cite[Example~5.13]{dal_maso}).

\begin{lemma}
\label{le:finite_g_p}
Let $u_k:\Rn\to\R$ be a sequence of finite convex functions and  $u:\Rn\to\R$ a finite convex function. Then $u_k$ is epi-convergent to $u$, if and only if $\,u_k$ converges pointwise to $u$.
\end{lemma}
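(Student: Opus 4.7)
The plan is to prove the two implications separately, using in each direction the classical fact that the hypothesis of finiteness forces good local behavior. Throughout, note that a finite convex function on $\Rn$ has full domain $\Rn$ and hence is automatically continuous.

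For the forward implication, suppose $u_k \eto u$. Since $u$ is finite, $\dom u = \Rn$, which has non-empty interior and no boundary points at all. Therefore Lemma \ref{thm:g_comp} applies with any compact set $K\subset \Rn$, giving uniform convergence of $u_k$ to $u$ on $K$. In particular, for each fixed $x\in\Rn$, taking $K=\{x\}$ yields $u_k(x)\to u(x)$, which is pointwise convergence.

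For the reverse implication, assume $u_k\to u$ pointwise. I would first invoke the standard fact (Rockafellar--Wets \cite{RockafellarWets}, see also the comment in \cite[Example 5.13]{dal_maso}) that pointwise convergence of a sequence of finite convex functions on $\Rn$ to a finite convex function is automatically \emph{locally uniform}, i.e.\ uniform on every compact set. Granting this, I would verify the two epi-convergence conditions. Condition (ii) is immediate upon taking the constant sequence $x_k=x$, since $u_k(x)\to u(x)$ gives (\ref{eq:gc_sup}). For condition (i), let $x_k\to x$ and set $K=\{x\}\cup\{x_k:k\in\N\}$, which is compact. By local uniform convergence, $\sup_{y\in K}|u_k(y)-u(y)|\to 0$; combined with the continuity of $u$, this yields $u_k(x_k)-u(x_k)\to 0$ and $u(x_k)\to u(x)$, so $u_k(x_k)\to u(x)$ and in particular (\ref{eq:gc_inf}) holds (with equality).

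The main obstacle is the locally uniform convergence step: it is the one nontrivial input and must be cited rather than re-derived. A self-contained justification would go through the monotonicity of difference quotients of convex functions—bounding $u_k$ above and below on a compact set $K\subset\Int B$ in terms of the values of $u_k$ at finitely many points on $\partial B$—and then use pointwise convergence at those finitely many points to obtain uniform estimates on $K$. Since Rockafellar--Wets is already a standing reference in this section, citing their result is the cleanest route and I would only sketch this argument if a fully self-contained exposition were required.
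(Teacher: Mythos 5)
Your argument is correct. Note, however, that the paper does not actually prove this lemma: it is stated with a reference to \cite[Example~5.13]{dal_maso}, so there is no internal proof to compare against, and any sound argument is acceptable. Your route is a reasonable way to make the statement self-contained: the forward direction via Lemma \ref{thm:g_comp} is exactly right, since a finite convex limit has $\dom u=\Rn$, so every compact set qualifies and uniform convergence on compacta gives pointwise convergence (note that epi-convergence alone, without convexity, would only give $u(x)\le\liminf_k u_k(x)$, so invoking Lemma \ref{thm:g_comp} is genuinely needed here). The reverse direction correctly isolates the one nontrivial ingredient, namely that pointwise convergence of finite convex functions to a finite convex function is locally uniform (Rockafellar--Wets, or Rockafellar's \emph{Convex Analysis}, Theorem 10.8); granting that, your verification of conditions (i) and (ii) of epi-convergence is complete, and in fact you obtain the stronger conclusion $u_k(x_k)\to u(x)$ for every $x_k\to x$. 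The only caveat is that the reverse direction is thus proved modulo a cited classical theorem rather than from first principles, but your sketch via monotonicity of difference quotients indicates the standard self-contained argument, and since the paper itself handles this lemma by citation, this level of rigor matches the paper's.
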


\noindent
The last statement is no longer true if the functions may attain the value $+\infty$. In that case
\begin{equation*}
\elim_{k\to \infty} u_k(x) \leq \lim\nolimits_{k\to \infty} u_k(x),
\end{equation*}
for all $x\in\Rn$ such that both limits exist.

We want to connect epi-convergence of functions from $\CV$ with the convergence of their sublevel sets. The natural topology on $\cK^n$ is induced by the Hausdorff distance. For $K,L\subset \Rn$, we write 
$$K+L=\{x+y: x\in K, y\in L\}$$
for their \emph{Minkowski sum}. Let $B\subset\Rn$ be the closed, $n$-dimensional unit ball. For $K,L\in\cK^n$, the Hausdorff distance is
$$\delta(K,L)= \inf\{\e>0: K\subset L+ \e\,B, \, L\subset K+\e\,B\}.$$  
We write $K_i\to K$ as $i\to\infty$, if $\delta(K_i, K)\to 0$ as $i\to\infty$. For the next result we need the following description of Hausdorff convergence on $\cK^n$ (see, for example, \cite[Theorem 1.8.8]{Schneider:CB2}).

\begin{lemma}
\label{th:hd_conv}
The convergence $\lim_{i\to\infty} K_i = K$ in $\cK^n$ is equivalent to the following conditions taken together:
\begin{enumerate}
	\item[(i)] Each point in $K$ is the limit of a sequence $(x_i)_{i\in\N}$ with $x_i\in K_i$ for $i\in\N$.
	\item[(ii)] The limit of any convergent sequence $(x_{i_j})_{j\in\N}$ with $x_{i_j} \in K_{i_j}$ for $j\in\N$ belongs to $K$.
\end{enumerate}
\end{lemma}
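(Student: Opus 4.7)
The plan is to prove the two implications separately, using the definition $\delta(K,L)=\inf\{\varepsilon>0: K\subset L+\varepsilon B, L\subset K+\varepsilon B\}$ throughout.

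For the forward direction, assume $K_i\to K$ in the Hausdorff metric and set $\varepsilon_i=\delta(K_i,K)\to 0$. To verify (i), given $x\in K$ the inclusion $K\subset K_i+\varepsilon_i B$ produces $x_i\in K_i$ with $\lvert x-x_i\rvert\le \varepsilon_i$, hence $x_i\to x$. To verify (ii), given $x_{i_j}\in K_{i_j}$ with $x_{i_j}\to x$, the inclusion $K_{i_j}\subset K+\varepsilon_{i_j} B$ gives $k_j\in K$ with $\lvert x_{i_j}-k_j\rvert\le \varepsilon_{i_j}$, so $k_j\to x$, and since $K$ is closed we conclude $x\in K$.

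For the reverse direction, assume (i) and (ii). I would first establish the preliminary fact that the tails of the sequence lie in a common bounded set. Choose $R>0$ with $K\subset \Int B(0,R)$, fix any $p\in K$, and use (i) to pick $p_i\in K_i$ with $p_i\to p$. Suppose for contradiction there is a subsequence $z_j\in K_{i_j}$ with $\lvert z_j\rvert\to\infty$. For large $j$ the segment $[p_{i_j},z_j]\subset K_{i_j}$ (by convexity) crosses $\partial B(0,R+1)$ at a point $w_j$ with $\lvert w_j\rvert=R+1$, so a convergent subsequence $w_{j_k}\to w$ exists; condition (ii) then forces $w\in K$, contradicting $K\subset \Int B(0,R)$. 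With boundedness in hand, suppose $\delta(K_i,K)\not\to 0$. Passing to a subsequence, either (a) some $y_i\in K$ satisfies $d(y_i,K_i)>\varepsilon$, or (b) some $z_i\in K_i$ satisfies $d(z_i,K)>\varepsilon$. In case (a), compactness of $K$ yields $y_{i_j}\to y\in K$; applying (i) to $y$ gives $x_i\in K_i$ with $x_i\to y$, whence $d(y_{i_j},K_{i_j})\le \lvert y_{i_j}-x_{i_j}\rvert\to 0$, a contradiction. In case (b), boundedness yields a convergent subsequence $z_{i_j}\to z$, and (ii) gives $z\in K$, contradicting $d(z,K)\ge \varepsilon$.

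The main obstacle is the preliminary boundedness step in the reverse direction: conditions (i) and (ii) taken individually do not prevent the $K_{i_j}$ from having points escaping to infinity, since (ii) only controls sequences that already converge. The argument must combine (i) (to anchor a point inside each $K_i$ near a chosen $p\in K$) with the convexity of the $K_i$ (to interpolate between the anchor and any hypothetical escaping point), producing a bounded auxiliary sequence to which (ii) can be applied. Once boundedness is secured, the remainder reduces to a routine subsequence extraction.
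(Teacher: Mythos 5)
Your proof is correct. Note, however, that the paper does not prove this lemma at all: it is quoted as a known description of Hausdorff convergence, with a reference to Schneider, \emph{Convex Bodies}, Theorem 1.8.8, so there is no in-paper argument to compare against. What your write-up adds is a self-contained proof, and you correctly isolate the one genuinely nontrivial point, namely the uniform boundedness of the tails of $(K_i)$ in the reverse direction: conditions (i) and (ii) alone do not give this for general compact sets (take $K_i=\{0\}\cup\{i\,e_1\}$ and $K=\{0\}$, where (i) and (ii) hold but $\delta(K_i,K)\to\infty$), so the interpolation along the segment $[p_{i_j},z_j]$, which uses convexity of $K_{i_j}$, is exactly where the hypothesis $K_i\in\cK^n$ enters; this is the same mechanism that makes the classical statement work. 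The remaining steps (the forward direction via the inclusions $K\subset K_i+\varepsilon_i B$ and $K_i\subset K+\varepsilon_i B$, and the two-case subsequence argument for the converse) are routine and correctly executed; the only cosmetic point is that you implicitly use that the infimum defining $\delta(K_i,K)$ is attained for compact sets, which is true, and could alternatively be avoided by working with $\varepsilon_i+1/i$.
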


Each sublevel set of a function from $\CV$ is either empty or in $\cK^n$. 
We say that $\{u_k \leq t\} \to \emptyset$ as $k\to\infty$ if there exists $k_0\in\N$ such that $\{u_k \leq t\} = \emptyset$ for  $k\geq k_0$. We include the proof of the following simple result, for which we did not find a suitable reference.

\begin{lemma}
\label{le:hd_conv_lvl_sets}
Let $u_k,u\in\CV$. If $u_k \eto u$ as $k\to\infty$, then $\{u_k\leq t\} {\to} \{u\leq t\}$ as $k\to\infty$ for every $t\in\R$ with $t\neq \min_{x\in\Rn} u(x)$.
\end{lemma}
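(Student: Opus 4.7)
The plan is to verify the two conditions in Lemma~\ref{th:hd_conv} when $t>\min_{x\in\Rn} u(x)$ and to show that $\{u_k\leq t\}$ is eventually empty when $t<\min_{x\in\Rn} u(x)$. In both cases a preliminary equi-boundedness step is needed to prevent the sublevel sets of $u_k$ from escaping to infinity, and this is the main obstacle.

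\emph{Case $t>\min u$.} For condition (ii) of Lemma~\ref{th:hd_conv}, a sequence $x_{k_j}\to x$ with $u_{k_j}(x_{k_j})\leq t$ gives $u(x)\leq t$ directly from \eqref{eq:gc_inf}. For condition (i), given $x\in\{u\leq t\}$ with $u(x)<t$, I pick $x_k\to x$ as in \eqref{eq:gc_sup} and observe $u_k(x_k)<t$ eventually. If $u(x)=t$, I approximate $x$ by a sequence in $\relint\{u\leq t\}\subseteq\{u<t\}$ via \eqref{le:level_sets} and diagonalize. For equi-boundedness, pick $R$ with $\{u\leq t\}\subseteq\Int(RB)$, then select $y_0$ with $u(y_0)<t$ and the associated $y_k\to y_0$ from \eqref{eq:gc_sup}, so $u_k(y_k)<t$ eventually. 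If some $x_{k_j}\in\{u_{k_j}\leq t\}$ had $|x_{k_j}|\geq R$, convexity of $u_{k_j}$ on $[y_{k_j},x_{k_j}]$ would give a point $z_{k_j}\in\partial(RB)$ with $u_{k_j}(z_{k_j})\leq t$; a subsequential limit $z^*\in\partial(RB)$ then satisfies $u(z^*)\leq t$ by \eqref{eq:gc_inf}, contradicting the choice of $R$.

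\emph{Case $t<\min u$.} Suppose for contradiction that $x_{k_j}\in\{u_{k_j}\leq t\}$. If $(x_{k_j})$ is bounded, a subsequential limit $x^*$ yields $u(x^*)\leq t<\min u$ by \eqref{eq:gc_inf}, impossible. If $|x_{k_j}|\to\infty$, pick a minimizer $y_0$ of $u$ and a sequence $y_k\to y_0$ with $u_k(y_k)\to\min u$ from \eqref{eq:gc_sup}. Passing to a subsequence so that the unit vectors $(x_{k_j}-y_{k_j})/|x_{k_j}-y_{k_j}|$ converge to some $v\in\Sph$, I fix $R>0$ and set $z_{k_j}:=y_{k_j}+R(x_{k_j}-y_{k_j})/|x_{k_j}-y_{k_j}|$, so $z_{k_j}\to y_0+Rv$. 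Convexity gives
\[
u_{k_j}(z_{k_j})\leq \Bigl(1-\frac{R}{|x_{k_j}-y_{k_j}|}\Bigr)u_{k_j}(y_{k_j})+\frac{R}{|x_{k_j}-y_{k_j}|}u_{k_j}(x_{k_j}),
\]
and since $u_{k_j}(y_{k_j})\to\min u$ and $u_{k_j}(x_{k_j})\leq t$, the right side has $\limsup\leq\min u$ (a divergence $u_{k_j}(x_{k_j})\to-\infty$ would only drive $u(y_0+Rv)$ below every real number through \eqref{eq:gc_inf}, contradicting properness of $u$). Hence $u(y_0+Rv)\leq\min u$ for every $R>0$, so $u$ is constant $\min u$ along the ray from $y_0$ in direction $v$, contradicting coercivity of $u$.

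The main obstacle is precisely this equi-boundedness step: epi-convergence of $u_k$ to a coercive $u$ does not automatically force the $u_k$ to be uniformly coercive, and the barrier arguments above---combining \eqref{eq:gc_sup} at a well-chosen reference point with \eqref{eq:gc_inf} on a bounding sphere or along a ray---are what rule out sublevel sets running off to infinity. Once equi-boundedness is in place, Lemma~\ref{th:hd_conv} packages the rest of the argument into a routine verification.
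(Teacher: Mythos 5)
Your proof is correct, and its skeleton is the same as the paper's: verify conditions (i) and (ii) of Lemma~\ref{th:hd_conv} for $t>\min_{x\in\Rn}u(x)$ (using (\ref{le:level_sets}) and the recovery sequences from (\ref{eq:gc_sup}) for (i), the liminf inequality (\ref{eq:gc_inf}) together with Lemma~\ref{le:g_conv_subseq} for (ii)), and show that $\{u_k\leq t\}$ is eventually empty when $t<\min_{x\in\Rn}u(x)$. The differences lie in how you handle boundedness. In the first case your barrier argument on $\partial(RB)$ is correct but redundant: Lemma~\ref{th:hd_conv} (Schneider's Theorem 1.8.8) asserts that for compact convex sets conditions (i) and (ii) alone are already equivalent to Hausdorff convergence, so no separate equi-boundedness step is needed once the sets are known to lie in $\cK^n$ (which they do, since functions in $\CV$ are coercive and closed); convexity is exactly what rules out sublevel sets escaping to infinity inside that cited result. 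In the second case the paper avoids your ray argument by a shorter bootstrap: any offending points $x_{i_j}$ with $u_{i_j}(x_{i_j})\leq t$ lie in $\{u_{i_j}\leq \umin+1\}$, and since $\umin+1>\umin$ the already-proved case gives $\{u_{i_j}\leq \umin+1\}\to\{u\leq \umin+1\}$, hence boundedness of $(x_{i_j})$, after which a convergent subsequence and (\ref{eq:gc_inf}) give the contradiction. Your alternative — extracting a limiting direction $v$ and using convexity along $[y_{k_j},x_{k_j}]$ to force $u\equiv\min u$ on a ray, contradicting coercivity — is a valid self-contained substitute (the upper bound $\lambda_{k_j}u_{k_j}(x_{k_j})\leq \lambda_{k_j}t\to 0$ makes the limsup estimate work regardless of how negative $u_{k_j}(x_{k_j})$ is), but it is heavier than the paper's reuse of the first case. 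In short: correct proof, same strategy, with extra work where the paper lets Lemma~\ref{th:hd_conv} and a bootstrapping trick do the job.
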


\begin{proof}
First, let $t>\min_{x\in\Rn} u(x)$. 
For $x\in\relint\{u\leq t\}$,  it follows from (\ref{le:level_sets}) that $s=u(x)<t$. Since $u_k$ epi-converges to $u$, there exists a sequence $x_k$ that converges to $x$ such that $u_k(x_k)$ converges to $u(x)$. Therefore, there exist $\e>0$ and $k_0\in\N$ such that for all $k\geq k_0$
\begin{equation*}
u_k(x_k)\leq s+\e\leq t.
\end{equation*}
Thus, $x_k\in\{u_k \leq t\}$, which shows that $x$ is a limit of a sequence of points from $\{u_k\le t\}$. It is easy to see that this implies $(i)$ of Lemma \ref{th:hd_conv}.

Now, let $(x_{i_j})_{j\in\N}$ be a convergent sequence in $\{u_{i_j}\leq t\}$ with limit $x\in\Rn$. By Lemma~\ref{le:g_conv_subseq}, the subsequence $u_{i_j}$ epi-converges to  $u$. Therefore
\begin{equation*}
u(x)\leq \liminf_{j\to\infty} u_{i_j}(x_{i_j}) \leq t
\end{equation*}
which gives (ii) of Lemma \ref{th:hd_conv}.

Second, let $t<\min_{x\in\Rn} u(x)=\umin$. Since $\{u\leq t\}=\emptyset$,  we have to show that there exists $k_0\in\N$ such that for every $k\geq k_0$ and $x\in\R^n$,
\begin{equation*}
u_k(x) > t.
\end{equation*}
Assume that there does not exist such an index $k_0$. Then there are infinitely many points $x_{i_j}$ such that $u_{i_j}(x_{i_j}) \leq t$. Note, that
\begin{equation*}
x_{i_j} \in \{u_{i_j} \leq t\} \subseteq \{u_{i_j} \leq \umin+1\}.
\end{equation*}
By Lemma \ref{le:g_conv_subseq}, we know that $u_{i_j} \eto u$ and therefore we can apply the previous argument to obtain that $\{u_{i_j} \leq \umin+1\} \to \{u \leq \umin+1\}$, which shows that the sequence $x_{i_j}$ is bounded. Hence, there exists a convergent subsequence $x_{i_{j_k}}$ with limit $x\in\Rn$. Applying Lemma \ref{le:g_conv_subseq} again, we obtain that $u_{i_{j_k}}$ is epi-convergent to $u$ and therefore
\begin{equation*}
u(x)\leq \liminf_{k\to\infty} u_{i_{j_k}}(x_{i_{j_k}}) \leq t.
\end{equation*}
This is a contradiction. Hence $\{u_k\leq t\}$ must be empty eventually.
\end{proof}

\subsection{Conjugate functions and the cone property}

We require a uniform lower bound for an epi-convergent sequence of functions from $\CV$. This is established by showing that all epigraphs are contained in a suitable cone that is given by the function $a\,\vert x\vert +b$ with $a>0$ and $b\in\R$. To establish this \textit{uniform cone property} of an epi-convergent sequence, we use conjugate functions. 

For a convex function $u:\Rn\to(-\infty,+\infty]$, its \textit{conjugate function} $u^*:\Rn\to(-\infty,+\infty]$ is defined as
\begin{equation*}
u^*(y)=\sup_{x\in\Rn} \big( \<y,x\>-u(x)\big),\quad y\in\Rn.
\end{equation*}
Here $\<y,x\>$ is the inner product of $x,y\in\R^n$. If $u$ is a closed convex function, then also $u^*$ is a closed convex function and $u^{**}=u$. Conjugation reverses inequalities, that is, if $u\le v$, then $u^*\ge v^*$.

The \textit{infimal convolution} of two closed convex functions $u_1,u_2:\Rn\to(-\infty,+\infty]$ is defined by
\begin{equation*}
(u_1 \infconv u_2)(x)=\inf_{x=x_1+x_2} \big(u_1(x_1)+u_2(x_2)\big),\quad x\in\Rn.
\end{equation*}
This just corresponds to the Minkowski addition of the epigraphs of $u_1$ and $u_2$, that is
\begin{equation}
\label{eq:inf_conv_epi_add}
\epi (u_1 \infconv u_2) = \epi u_1 + \epi u_2. 
\end{equation}
We remark that
for two closed convex functions $u_1, u_2$ the infimal convolution $u_1 \infconv u_2$ need not be closed, even when it is convex.
If $u_1 \Box u_2 > -\infty$ pointwise, then
\begin{equation}
\label{eq:inf_conv_conj}
(u_1 \infconv u_2)^* = u_1^* + u_2^*.
\end{equation}
For $t>0$ and a closed convex function $u$ define the function $u_t$  by
\begin{equation*}
u_t(x)=t\, u\left(\tfrac{x}{t}\right).
\end{equation*}
For the convex conjugate of $u_t$, we have
\begin{equation}
\label{eq:epi_mult_conj}
u_t^*(y)=\sup_{x\in\Rn} \big( \<y,x\>-t\, u\left(\tfrac{x}{t}\right) \big)= \sup_{x\in\Rn} \big( \<y,t x\> - t\, u(x)\big) = t \,u^*(y)
\end{equation}
(see, for example,  \cite{Schneider:CB2}, Section 1.6.2).

\goodbreak
The next result shows a fundamental relationship between convex functions and their conjugates. It was first established by Wijsman (see \cite[Theorem 11.34]{RockafellarWets}).

\begin{lemma}\label{th:wijsman}
If $u_k,u\in\CV$, then
\begin{equation*}
u_k \eto u \quad \Longleftrightarrow \quad u_k^*\eto u^*.
\end{equation*}
\end{lemma}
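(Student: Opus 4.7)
The plan is to first dispatch the forward direction $u_k \eto u \Rightarrow u_k^* \eto u^*$; the reverse direction then follows by applying the same argument to the sequence $\{u_k^*\}$ — a routine extension, since the coercivity built into $\CV$ is not essential for the forward implication but only makes $u^*$ and all $u_k^*$ finite, simplifying the verification — and invoking the involution $u^{**}=u$. Because each $u\in\CV$ is coercive, $u^*$ and every $u_k^*$ are finite, hence continuous, convex functions on $\Rn$; Lemma~\ref{le:finite_g_p} therefore identifies $u_k^*\eto u^*$ with pointwise convergence $u_k^*(y)\to u^*(y)$ for every $y\in\Rn$, and the task reduces to establishing this pointwise convergence.

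The lower bound $\liminf_k u_k^*(y)\ge u^*(y)$ is direct from the definition. For arbitrary $x_0\in\Rn$, condition~(ii) of $u_k\eto u$ yields $x_k\to x_0$ with $u_k(x_k)\to u(x_0)$, and then the elementary inequality $u_k^*(y)\ge \langle y,x_k\rangle - u_k(x_k)$ passes in the liminf to $\liminf_k u_k^*(y)\ge \langle y,x_0\rangle - u(x_0)$. Taking the supremum over $x_0$ gives the bound.

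The matching upper bound $\limsup_k u_k^*(y)\le u^*(y)$ is the main obstacle: the argmaxes in $u_k^*(y)=\sup_x(\langle y,x\rangle-u_k(x))$ need not remain bounded along the sequence, since a uniform cone bound on $\{u_k\}$ — the content of the subsequent subsection — itself relies on the present lemma, creating a circularity one must break. My plan is to regularize via the Moreau envelope $e_\alpha v := v \,\infconv\, \tfrac{1}{2\alpha}|\cdot|^2$. By~(\ref{eq:inf_conv_epi_add}), inf-convolution with the fixed, finite, coercive kernel $\tfrac{1}{2\alpha}|\cdot|^2$ preserves epi-convergence, so $e_\alpha u_k \eto e_\alpha u$ for every $\alpha>0$; by~(\ref{eq:inf_conv_conj}), $(e_\alpha v)^* = v^* + \tfrac{\alpha}{2}|\cdot|^2$, which rearranges to
\begin{equation*}
u_k^*(y) = (e_\alpha u_k)^*(y) - \tfrac{\alpha}{2}|y|^2.
\end{equation*}
The regularized envelopes $e_\alpha u_k$ and $e_\alpha u$ are finite, coercive and $C^{1,1}$-smooth, so the argmax in $(e_\alpha u_k)^*(y)$ is the unique solution of $\nabla e_\alpha u_k(x)=y$, and the standard fact that pointwise convergence of smooth convex functions to a smooth limit carries over to pointwise convergence of their gradients furnishes enough control to prove $(e_\alpha u_k)^*(y)\to(e_\alpha u)^*(y)$ for each fixed $\alpha>0$. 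The displayed identity then gives $u_k^*(y)\to u^*(y)$, completing the proof.
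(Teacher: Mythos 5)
Your argument breaks down at its very first reduction. Coercivity in the sense of $\CV$ means only $\lim_{|x|\to+\infty}u(x)=+\infty$ (equivalently, bounded sublevel sets); it does \emph{not} make $u^*$ finite. Finiteness of $u^*$ on all of $\Rn$ would require superlinear growth of $u$: already $u(x)=|x|\in\CV$ has $u^*=\Ind_B$, which takes the value $+\infty$. So Lemma~\ref{le:finite_g_p} does not apply to the conjugates, and — worse — the statement you reduce the forward direction to, namely $u_k^*(y)\to u^*(y)$ for \emph{every} $y$, is false in general: take $u_k(x)=\bigl(1-\tfrac1{k+1}\bigr)|x|$ and $u(x)=|x|$; then $u_k\eto u$ (finite convex functions converging pointwise), but for $|y|=1$ one has $u_k^*(y)=+\infty$ for all $k$ while $u^*(y)=0$. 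Epi-convergence of the conjugates is genuinely weaker than pointwise convergence at every point (it tolerates exactly this kind of boundary-of-domain defect), so the ``matching upper bound'' you aim for cannot be established — the plan itself, not just a step, has to change. The Moreau-envelope device inherits the same flaw: $(e_\alpha u_k)^*(y)=u_k^*(y)+\tfrac\alpha2|y|^2$ is $+\infty$ off $\dom u_k^*$, and for such $y$ the equation $\nabla e_\alpha u_k(x)=y$ has no solution (for $u_k=|\cdot|$ the gradient range of the envelope is the open unit ball), so ``the argmax is the unique solution of the gradient equation'' and the appeal to gradient convergence do not go through.

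Two further points. First, $e_\alpha u_k\eto e_\alpha u$ does not follow from (\ref{eq:inf_conv_epi_add}) alone: Painlev\'e--Kuratowski convergence of epigraphs is not obviously preserved under Minkowski addition of a fixed \emph{unbounded} set (the paper itself notes that $u_1\infconv u_2$ need not even be closed); the fact you want is essentially Rockafellar--Wets, Theorem~7.37, a result of the same depth as the lemma you are proving. Second, for the reverse direction you would need the forward implication for finite but non-coercive functions (e.g.\ $(\Ind_{[0,1]^n})^*$ is a support function, not coercive), so ``apply the same argument to $u_k^*$'' is not automatic once the forward proof leans on properties of $\CV$. For comparison: the paper does not prove this lemma at all — it is Wijsman's theorem, quoted from Rockafellar--Wets (Theorem~11.34) — so a correct self-contained proof would have been a genuine addition, but as it stands the attempt fails at the finiteness/pointwise-convergence step, which is not repairable.
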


The \emph{cone property} was established in \cite[Lemma 2.5]{ColesantiFragala} for functions in $\CV$.

\begin{lemma}\label{le:cone}
For $u\in\CV$, there exist constants $a,b \in \R$ with $a >0$ such that
\begin{equation*}
u(x)>a|x|+b
\end{equation*}
for every $x\in\Rn$.
\end{lemma}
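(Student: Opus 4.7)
My plan is a direct argument from convexity, coercivity, and lower semicontinuity (the conjugate-function approach hinted at by the section heading would also work, but is less self-contained for a single function). First I will observe that $u$ attains its minimum: by coercivity and lower semicontinuity each sublevel set $\{u\le t\}$ is compact, so the nested intersection $\bigcap_k\{u\le\inf u+1/k\}$ is nonempty and $\umin:=\min u=u(x_0)$ is finite at some $x_0\in\R^n$ (finiteness also uses that $u$ is proper).

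Next, compactness of $\{u\le\umin+1\}$ lets me choose $R>0$ with the strict inclusion $\{u\le\umin+1\}\subset B(x_0,R)$, where $B(x_0,R)$ is the open ball. This strictness is the one delicate point: it guarantees that every $y$ with $|y-x_0|\ge R$ satisfies $u(y)\ge\umin+1$, which is exactly what is needed to drive the next step.

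The heart of the argument is a one-dimensional slope estimate along the segment from $x_0$ to $x$. For $x$ with $|x-x_0|>R$, set $\lambda=R/|x-x_0|\in(0,1)$ and $z=(1-\lambda)x_0+\lambda x$, so that $|z-x_0|=R$ and hence $u(z)\ge\umin+1$. Convexity of $u$ gives $u(z)\le(1-\lambda)\umin+\lambda u(x)$, which rearranges to
\[ u(x)\ge \umin+\frac{|x-x_0|}{R}. \]
For $|x-x_0|\le R$ I have the trivial bound $u(x)\ge\umin\ge\umin+|x-x_0|/R-1$ (since $|x-x_0|/R\le 1$). Combining the two cases and applying $|x-x_0|\ge|x|-|x_0|$ produces
\[ u(x)\ge\frac{1}{R}|x|+\Bigl(\umin-1-\frac{|x_0|}{R}\Bigr) \]
for every $x\in\R^n$. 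Setting $a=1/R>0$ and taking $b$ strictly smaller than the constant in parentheses upgrades the estimate to the strict inequality $u(x)>a|x|+b$ claimed.

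The argument is entirely elementary and I foresee no real obstacle; the only point deserving care is the strict-inclusion step when choosing $R$, since without it one would only know $u(z)>\umin$ at the auxiliary point, which is too weak to yield a strictly positive slope $a$.
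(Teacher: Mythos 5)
Your argument is correct. The paper itself does not prove this lemma at all: it simply quotes the cone property from \cite[Lemma 2.5]{ColesantiFragala}, and the conjugate-function machinery in this section is only deployed later, for the \emph{uniform} cone property along an epi-convergent sequence (Lemma \ref{le:un_cone}), via Wijsman's theorem. Your proof is therefore a genuinely self-contained alternative: attain the minimum $\umin=u(x_0)$ using compactness of sublevel sets (lower semicontinuity plus coercivity, properness for finiteness), enclose $\{u\le\umin+1\}$ in an open ball $B(x_0,R)$, and run the standard convexity slope estimate along the segment from $x_0$ to $x$, which yields $u(x)\ge \umin-1+|x-x_0|/R$ for all $x$ (the case $u(x)=+\infty$ being trivial) and hence the strict bound after shrinking $b$. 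The only cosmetic point is your minimum-attainment step: writing $\bigcap_k\{u\le\inf u+1/k\}$ presupposes $\inf u>-\infty$; it is cleaner to intersect the compact nonempty sets $\{u\le t\}$ over $t$ decreasing to $\inf u$ (or over all $t$ with $\{u\le t\}\ne\emptyset$), which simultaneously shows $\inf u>-\infty$ because $u$ never takes the value $-\infty$. What your route buys is an elementary, quantitative proof with explicit constants $a=1/R$, $b<\umin-1-|x_0|/R$, independent of duality; what the paper's route (citation, and conjugates for the uniform version) buys is that the same dual mechanism extends uniformly over an epi-convergent sequence, which a pointwise ball-and-slope argument does not immediately give.
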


Next, we extend this result to an epi-convergent sequence of functions in $\CV$ and obtain a \emph{uniform cone property}.

\begin{lemma}
\label{le:un_cone}
Let $u_k, u \in\CV$. If $u_k \eto u$, then there exist constants $a,b \in \R$ with $a >0$ such that
\begin{equation*}
u_k(x)>a\,\vert x\vert +b\,\, \text{ and }\,\ u(x)>a\,|x|+b 
\end{equation*}
for every $k\in\N$ and  $x\in\Rn$.
\end{lemma}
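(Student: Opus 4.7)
My plan is to work with conjugate functions and exploit the duality between the cone bound $u(x) \geq a\,|x|+b$ for $u$ and an upper bound on $u^*$ near the origin. A direct computation gives that the conjugate of $v(x)=a\,|x|+b$ equals
\begin{equation*}
v^*(y) = \begin{cases} -b & \text{if } |y|\leq a,\\ +\infty & \text{otherwise}.\end{cases}
\end{equation*}
Since conjugation reverses inequalities and biconjugation is the identity on closed convex functions, $u(x)\geq a\,|x|+b$ holds for every $x\in\Rn$ if and only if $u^*(y)\leq -b$ for every $y$ with $|y|\leq a$. The strict inequality in the conclusion can then be recovered at the end by replacing $b$ with $b-1$.

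With this dictionary in place, I would first apply Lemma \ref{th:wijsman} to obtain $u_k^*\eto u^*$. Since $u\in\CV$, Lemma \ref{le:cone} produces $a_0>0$ and $b_0\in\R$ with $u(x)>a_0\,|x|+b_0$, and the dictionary then gives $u^*(y)\leq -b_0$ on the closed ball $a_0\,B$; in particular, every point of the open ball $\{|y|<a_0\}$ lies in the interior of $\dom u^*$. Lemma \ref{thm:g_comp} therefore applies to the epi-convergent sequence $u_k^*$ and yields uniform convergence of $u_k^*$ to $u^*$ on the compact set $\tfrac{a_0}{2}\,B$, which is strictly interior to $\dom u^*$. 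Consequently there exist $k_0\in\N$ and $M\in\R$ such that $u_k^*(y)\leq M$ for every $k\geq k_0$ and every $|y|\leq a_0/2$.

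For each of the finitely many remaining indices $k<k_0$, Lemma \ref{le:cone} provides individual cone constants $a_k>0$ and $b_k\in\R$ with $u_k^*(y)\leq -b_k$ on $a_k\,B$. Setting $a=\min\{a_0/2,a_1,\dots,a_{k_0-1}\}$ and $c=\max\{M,-b_1,\dots,-b_{k_0-1}\}$, one obtains a single bound $u_k^*(y)\leq c$ valid on $a\,B$ for every $k\in\N$, and likewise $u^*(y)\leq c$ on $a\,B$. Translating back through the dictionary (i.e., applying biconjugation to the majorizing function that equals $c$ on $a\,B$ and $+\infty$ outside) yields $u_k(x)\geq a\,|x|-c$ and $u(x)\geq a\,|x|-c$ for all $x\in\Rn$ and $k\in\N$, and choosing $b=-c-1$ gives the required strict inequality.

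The main point of the argument is the conjugate reformulation: the uniform cone property at infinity is awkward to establish directly from epi-convergence, but via Wijsman's theorem it becomes a uniform \emph{local} upper bound for $u_k^*$ near the origin, which is exactly the kind of control supplied by Lemma \ref{thm:g_comp}. The only remaining subtlety is absorbing the initial finite segment of the sequence, which is handled routinely by taking a minimum of the $a_k$'s and a maximum of the $-b_k$'s.
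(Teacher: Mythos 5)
Your proof is correct and follows essentially the same route as the paper: apply Lemma \ref{le:cone} to $u$, pass to conjugates via Lemma \ref{th:wijsman}, use Lemma \ref{thm:g_comp} to get a uniform upper bound for $u_k^*$ on a small ball around the origin, and dualize back to obtain the common cone. Your explicit treatment of the finitely many initial indices (shrinking $a$ and enlarging the bound using Lemma \ref{le:cone} for each $u_k$ with $k<k_0$) is in fact a welcome refinement of a point the paper's proof passes over silently.
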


\begin{proof}
By Lemma \ref{le:cone}, there exist constants $c>0$ and $d$ such that
\begin{equation*}
u(x)> c \vert x\vert + d = l(x).
\end{equation*}
Switching to  conjugates gives
$u^* < l^*$.
Note that
\begin{equation*}
l^*(y)= \Big(\!\sup_{x\in\Rn} \big( \<y,x\> - c\, |x|\big)\Big) - d
\end{equation*}
and
\begin{equation*}
\sup_{x\in\Rn} \big(\<y,x\>-c\,|x| \big)=
\begin{cases} 0 &\mbox{if } |y| \leq c \\ 
+\infty & \mbox{if } |y| > c.
\end{cases}
\end{equation*}
Hence $l^* = \Ind_{cB} - d$, where $cB$ is the closed centered ball with radius $c$. Set $a=c/2>0$. Hence $aB$ is a compact subset of $\Int \dom u^*$. Therefore,  Lemma \ref{th:wijsman} and Lemma \ref{thm:g_comp} imply that $u_k^*$ converges uniformly to $u^*$ on $aB$. Since $u^*<-d$ on $aB$, there exists a constant $b$ such that
$u_k^*(y) < -b$
for every $y\in aB$ and $k\in\N$ and therefore
\begin{equation*}
u_k^* < \Ind_{aB} - b,
\end{equation*}
for every $k\in\N$. Consequently
\begin{equation*}
u_k(x) > a|x|+b
\end{equation*}
for every $k\in\N$ and  $x\in\Rn$.
\end{proof}

Note, that Lemma \ref{le:hd_conv_lvl_sets} and Lemma \ref{le:un_cone} are no longer true if $u\equiv {+\infty}$. For example, consider $u_k(x)=\Ind_{k\,r( B+k^2 x_0)}$ for some $r>0$ and $x_0 \in \Rn\backslash\{0\}$. Then $u_k$ epi-converges to $u$ but every set $\{u_k \leq t\}$ for $t\geq 0$ is a ball of radius $kr$. In this case, the sublevel sets are not even bounded. Moreover, it is clear that there does not exist a uniform pointed cone that contains all the sets $\epi u_k$.

\subsection{Piecewise affine functions}

A polyhedron is the intersection of finitely many closed halfspaces. A function $u\in\CV$ is called \textit{piecewise affine}, if there exist finitely many $n$-dimensional convex polyhedra $C_1,\ldots,C_m$ with pairwise disjoint interiors such that $\bigcup_{i=1}^m C_i =\Rn$ and the restriction of $u$ to each $C_i$ is an affine function. The set of piecewise affine and convex functions will be denoted by $\CVpa$. We call $u\in\CV$ a \textit{finite element} of $\CV$ if $u(x)<+\infty$ for every $x\in\R$. Note that piecewise affine and convex functions are finite elements of $\CV$.
We want to show that $\CVpa$ is dense in $\CV$ and use the Moreau-Yosida approximation in our proof. That $\CVpa$ is dense in $\CV$ can also be deduced from more general results  (see,  \cite[Corollary 3.42] {Attouch}).

\goodbreak
Let $u\in\CV$ and $t>0$. Set  $q(x)=\tfrac 12 |x|^2$ and  recall that $q_t(x)=\frac{t}2\, q(\frac xt)$. The \textit{Moreau-Yosida approximation} of $u$ is defined as
\begin{equation*}
e_t u =u \infconv q_t,
\end{equation*}
or equivalently
\begin{equation*}
e_t u(x) = \inf_{y\in\Rn} \big( u(y)+\tfrac{1}{2t} |x-y|^2\big) = \inf_{x_1+x_2=x} \big(u(x_1)+\tfrac{1}{2t} |x_2|^2\big).
\end{equation*}
See, for example, \cite[Chapter 1, Section G]{RockafellarWets}. We require the following simple properties of the Moreau-Yosida approximation.

\begin{lemma}
\label{le:mo_yo_is_nice}
For $u\in\CV$, the Moreau-Yosida approximation $e_t u$ is a finite element of $\CV$ for every $t>0$. Moreover, $e_t u(x) \leq u(x)$ for  $x\in\Rn$ and $t>0$.
\end{lemma}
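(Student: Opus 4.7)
The plan is to verify the two assertions separately. The pointwise bound $e_tu(x)\le u(x)$ is a one-line consequence of the infimal-convolution formula via the trivial split $x_1=x$, $x_2=0$, giving $e_tu(x)\le u(x)+\tfrac{1}{2t}|0|^2=u(x)$.

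For $e_tu\in\CV$, I will check properness, convexity, lower semicontinuity, coercivity, and finiteness. Convexity of $e_tu=u\infconv q_t$ is immediate from (\ref{eq:inf_conv_epi_add}) because the Minkowski sum of the two convex epigraphs is convex. For finiteness I will use two bounds. For an upper bound, I will fix any $y_0\in\dom u$ (nonempty since $u$ is proper) and use
\[
e_tu(x)\le u(y_0)+\tfrac{1}{2t}|x-y_0|^2<+\infty
\]
for every $x\in\Rn$. For a lower bound, I will invoke Lemma \ref{le:cone} to obtain $a>0$ and $b\in\R$ with $u(y)>a|y|+b$ for every $y\in\Rn$, so that
\[
e_tu(x)\ge\inf_{y\in\Rn}\Big(a|y|+b+\tfrac{1}{2t}|x-y|^2\Big),
\]
which is finite for each $x$ since the quadratic term in $|y|$ dominates. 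Once $e_tu$ is everywhere finite it is proper, and finite convex functions on $\Rn$ are automatically continuous, hence lower semicontinuous.

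The only step that requires a short estimate is coercivity. Using $|x-y|\ge\big||x|-|y|\big|$, the previous lower bound reduces to the one-variable minimization of $as+\tfrac{1}{2t}(|x|-s)^2$ over $s\ge 0$; optimizing in $s$ yields a bound of the form $e_tu(x)\ge a|x|+b-\tfrac{a^2 t}{2}$ once $|x|\ge at$, which tends to $+\infty$ as $|x|\to+\infty$. I do not anticipate any real obstacle here: the whole argument rests on a single application of the cone property from Lemma \ref{le:cone} combined with this elementary quadratic estimate, together with the fact that infimal convolution preserves convexity.
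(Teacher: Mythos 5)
Your proof is correct and follows essentially the same route as the paper: the trivial split $x_2=0$ for the bound $e_tu\le u$, properness of $u$ for finiteness, and the epigraph identity $\epi e_t u=\epi u+\epi q_t$ for convexity. The only difference is that where the paper dismisses coercivity as easy to see from this identity, you supply an explicit estimate via the cone property (Lemma \ref{le:cone}) and a one-variable quadratic minimization giving $e_tu(x)\ge a|x|+b-\tfrac{a^2t}{2}$ for $|x|\ge at$, which is a correct elaboration of the step the paper leaves implicit.
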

\begin{proof}
Fix $t >0$. Since
\begin{equation*}
\inf_{x_1+x_2=x} \big(u(x_1)+\tfrac{1}{2t}|x_2|^2\big) \leq u(x)+\tfrac{1}{2t}|0|^2
\end{equation*}
for $x\in\R^n$, we have $e_t u(x) \leq u(x)$ for all $x\in\Rn$. Since $u$ is proper, there exists $x_0 \in\Rn$ such that $u(x_0)<+\infty$. This shows that
\begin{equation*}
e_t u(x) = \inf_{x_1+x_2 = x} \big(u(x_1) + \tfrac{1}{2t} |x_2|^2\big) \leq u(x_0) + \tfrac{1}{2t} |x-x_0|^2 < +\infty
\end{equation*}
for $x\in\R^n$,  which shows that $e_t u$ is finite. Using (\ref{eq:inf_conv_epi_add}) we obtain that
\begin{equation*}
\epi e_t u = \epi u + \epi q_t.
\end{equation*}
It is therefore easy to see, that $e_t u$ is a convex function such that $\lim_{|x|\to +\infty} e_t u(x) = +\infty$. \end{proof}

\begin{lemma}
\label{le:mo_yo_epi_conv}
For every $u\in\CV$, 
$\,\elim_{t \to 0^+} e_t u =u$.

\end{lemma}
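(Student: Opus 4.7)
The plan is to convert the problem to one about conjugate functions via Wijsman's theorem (Lemma \ref{th:wijsman}), where the quadratic perturbation becomes trivial to analyze. Concretely, $q_t^*(y) = \tfrac{t}{2}|y|^2$ shrinks to $0$ as $t\to 0^+$, so $(e_t u)^* = u^* + q_t^*$ should converge to $u^*$, and Wijsman's theorem will then transfer this back to $e_t u$.

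Setting this up, first I would confirm that $e_t u \in \CV$ for each $t>0$: Lemma \ref{le:mo_yo_is_nice} gives that $e_t u$ is a finite convex function with $\lim_{|x|\to\infty} e_t u(x) = +\infty$, and finite convex functions on $\R^n$ are continuous and hence lower semicontinuous. Using (\ref{eq:epi_mult_conj}) together with $q^* = q$, one computes $q_t^*(y) = \tfrac{t}{2}|y|^2$. Since $e_t u = u \,\infconv\, q_t \in \CV$ is pointwise finite (in particular $>-\infty$), equation (\ref{eq:inf_conv_conj}) applies and yields
\[
(e_t u)^*(y) = u^*(y) + \tfrac{t}{2}|y|^2.
\]

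The core step is to verify $(e_{t_k} u)^* \eto u^*$ for every sequence $t_k\to 0^+$. For condition (i) of epi-convergence, given any $y_k \to y$, the lower semicontinuity of $u^*$ (which holds for any conjugate, being a supremum of affine functions) together with the nonnegativity of the quadratic term yields
\[
\liminf_{k\to\infty}\bigl(u^*(y_k) + \tfrac{t_k}{2}|y_k|^2\bigr) \;\geq\; \liminf_{k\to\infty} u^*(y_k) \;\geq\; u^*(y),
\]
where $|y_k|$ is bounded so $\tfrac{t_k}{2}|y_k|^2 \to 0$. For condition (ii), the constant sequence $y_k = y$ suffices, since $u^*(y) + \tfrac{t_k}{2}|y|^2 \to u^*(y)$ (trivially also when $u^*(y) = +\infty$). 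Lemma \ref{th:wijsman} then gives $e_{t_k} u \eto u$, and since $t_k\to 0^+$ was arbitrary, $\elim_{t\to 0^+} e_t u = u$.

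There is no serious obstacle here; the only items requiring care are confirming that both $e_t u$ and $u$ lie in $\CV$ so that Wijsman's theorem applies, and that $e_t u > -\infty$ pointwise so that the identity (\ref{eq:inf_conv_conj}) converting an infimal convolution into a sum of conjugates is valid. Both are immediate from Lemma \ref{le:mo_yo_is_nice}. A conjugate-free alternative would verify the epi-convergence definition directly: any near-minimizer $y_k$ of $e_{t_k} u(x_k) = \inf_y(u(y) + \tfrac{1}{2t_k}|x_k - y|^2)$ is confined to a bounded set by the cone property (Lemma \ref{le:cone}), and the quadratic term forces $y_k \to x$, so lower semicontinuity of $u$ gives $u(x) \leq \liminf_k u(y_k) \leq \liminf_k e_{t_k} u(x_k)$; combined with $e_{t_k} u \leq u$ from Lemma \ref{le:mo_yo_is_nice}, this proves both epi-conditions. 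The conjugate approach is cleaner and fits the theme of the section.
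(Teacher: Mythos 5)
Your proposal is correct and follows essentially the same route as the paper: reduce via Wijsman's theorem (Lemma \ref{th:wijsman}) to showing $(e_t u)^* = u^* + t\,q^* \eto u^*$, using (\ref{eq:inf_conv_conj}) and (\ref{eq:epi_mult_conj}). You merely verify the final epi-convergence of $u^* + \tfrac{t}{2}|\cdot|^2$ to $u^*$ directly from the definition, where the paper leaves it as an easy observation, so no substantive difference.
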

\begin{proof}
By Lemma \ref{th:wijsman}, we have $e_t u \eto u$ if and only if $(e_t u)^* \eto u^*$. By the definition of $e_t$, (\ref{eq:inf_conv_conj})  and (\ref{eq:epi_mult_conj}), we have
\begin{equation*}
(e_t u)^* = (u\infconv q_t)^* = u^* + t q^*.
\end{equation*}
Therefore, we need to show that $u^* + t q^* \eto u^*$. For $q(x)=\tfrac 12 |x|^2$, we have $q=q^*$. 
Since epi-convergence is equivalent to pointwise convergent if the functions are finite, it follows  that $\elim_{t \to 0^+} t q^* = {0}$. 
It is now easy to see that $\elim_{t\to 0^+} \big(u^* + t q^* )= u^*$ and therefore $\elim_{t\to 0^+}(e_t u)^* = u^*$.
\end{proof}

\goodbreak

\begin{lemma}
\label{le:pa_dense}
$\CVpa$ is dense in $\CV$.
\end{lemma}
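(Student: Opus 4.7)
The strategy is a two-stage approximation using the Moreau--Yosida regularization. Given $u \in \CV$, Lemma~\ref{le:mo_yo_epi_conv} gives $e_{1/k} u \eto u$ as $k \to \infty$, and by Lemma~\ref{le:mo_yo_is_nice} each $e_{1/k} u$ is a finite element of $\CV$. Hence by a standard diagonal argument it suffices to approximate every finite $v \in \CV$ by a sequence in $\CVpa$.

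Let $v \in \CV$ be finite. Since $v$ is finite, convex, and therefore continuous and locally Lipschitz, Lemma~\ref{le:finite_g_p} allows us to replace epi-convergence by pointwise convergence. For each $k \in \N$, I would choose a finite set $X_k \subset \Rn$ that is $1/k$-dense in the ball $kB$, and for each $x \in X_k$ select a subgradient $\xi_x \in \partial v(x)$. Setting $L_x(y) = v(x) + \<\xi_x, y - x\>$, define
$$v_k(y) = \max_{x \in X_k} L_x(y).$$
This is a maximum of finitely many affine functions, hence convex and piecewise affine, and the subgradient inequality yields $v_k \leq v$. Given $y \in \Rn$ and $x \in X_k$ with $|x - y| \leq 1/k$, the local Lipschitz bound controls $|\xi_x|$ on $y + B$ uniformly in $k$, so $v_k(y) \to v(y)$.

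The subtle point is coercivity: a maximum of finitely many affine functions need not tend to $+\infty$ in every direction, so it is not a priori clear that $v_k \in \CVpa$. Here the cone property from Lemma~\ref{le:cone} enters: $v(x) \geq a|x|+b$ for some $a>0$ and $b \in \R$. Applying the subgradient inequality for $v$ at $te$ (with $e \in \Sph$ and $t$ large), evaluated at the origin, yields $\<\xi,e\> \geq a + (b-v(0))/t$ for every $\xi \in \partial v(te)$; hence $\<\xi_{te}, e\> \geq a/2$ once $t$ is large enough. Enlarging $X_k$ to contain such far-out points along a sufficiently fine net of directions in $\Sph$ (fine enough relative to the norms of the corresponding subgradients) ensures that $\{\xi_x : x \in X_k\}$ positively spans $\Rn$, which forces $v_k$ to have bounded sublevel sets and therefore to lie in $\CVpa$.

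The main obstacle is precisely this coercivity step: the naive ``subgradients on a dense net'' construction furnishes pointwise convergence effortlessly, but not membership in $\CV$. The cone property together with the subgradient inequality provide the geometric input that allows $X_k$ to be enriched with a direction-spanning family of far-out tangent slopes, closing the argument.
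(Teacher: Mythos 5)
Your overall route is the same as the paper's: reduce to the finite elements of $\CV$ via the Moreau--Yosida approximation (Lemmas \ref{le:mo_yo_is_nice} and \ref{le:mo_yo_epi_conv}), then approximate finite functions by piecewise affine ones, using Lemma \ref{le:finite_g_p} to replace epi-convergence by pointwise convergence, and conclude by transitivity of denseness. The paper dispatches the second stage with ``it is easy to see''; you carry it out explicitly with tangent planes at a $1/k$-net of $kB$, and you correctly isolate coercivity of the approximants (i.e.\ membership in $\CV$) as the only genuine issue there.

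The one step that is not yet a proof is your resolution of that issue: choosing the directional net ``fine enough relative to the norms of the corresponding subgradients'' is circular as stated, because which subgradients occur, and how large they are, depends on the net you are about to choose, and nothing you say excludes that refining the net keeps producing far-out points with ever larger subgradients. The repair is small and stays inside your framework. Your inequality $\<\xi,e\>\ge a+(b-v(0))/t$ is uniform in $e\in\Sph$, so you may put all auxiliary points on a single sphere of radius $t_0=\max\{1,\,2(v(0)-b)/a\}$; since $v$ is finite and convex it is Lipschitz on $(t_0+1)B$, so every subgradient at a point of $t_0\,\partial B$ has norm at most some $M<\infty$ determined by $v$ alone, and only now do you pick the net, with mesh $\delta<a/(2M)$. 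Then for any $u\in\Sph$ and the nearest net direction $e_j$ one gets $\<\xi_j,u\>\ge\<\xi_j,e_j\>-M\delta\ge a/2-M\delta>0$, the chosen gradients positively span $\Rn$, and $v_k$ is coercive, hence in $\CVpa$. Alternatively, you can avoid gradient norms altogether and even dispense with the extra far-out points: if $x_{\min}$ minimizes $v$ and, for $u\in\Sph$, you take the net point $x_u$ within $1/k$ of $x_{\min}+su$ with $s=k-|x_{\min}|$, then the subgradient inequality at $x_u$ evaluated at $x_u-su$ (a point within $1/k$ of $x_{\min}$) gives $s\,\<\xi_{x_u},u\>\ge v(x_u)-v(x_u-su)\ge a(s-|x_{\min}|-1/k)+b-\min_{x\in\Rn}v(x)-1$ for all large $k$, which is positive; so the tangents at your $1/k$-net of $kB$ already force coercivity once $k$ is large. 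With either correction your argument is complete.
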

\begin{proof}
By Lemma \ref{le:finite_g_p}, epi-convergence coincides with pointwise convergence on finite functions in $\CV$. Therefore, it is easy to see that $\CVpa$ is epi-dense in the finite elements of $\CV$. Now for arbitrary $u\in\CV$ it follows from Lemma \ref{le:mo_yo_is_nice} that $e_t u$ is a finite element of $\CV$. Since Lemma \ref{le:mo_yo_epi_conv} gives that $\elim_{t\to 0^+}e_t u =u$, the finite elements of $\CV$ are a dense subset of $\CV$. Since denseness is transitive, the piecewise affine functions are a dense subset of $\CV$.  
\end{proof}

\section{Valuations on Convex Functions}

The functionals that appear in the theorem are discussed.  It is shown that they are continuous, $\sln$ and translation invariant valuations on $\CV$.

\begin{lemma}
\label{le:min_is_a_val}
For $\zeta\in C(\R)$, the map
\begin{equation}
\label{eq:generalized_euler_char}
u\mapsto \zeta\big(\min\nolimits_{x\in\Rn} u(x)\big)
\end{equation}
is a continuous, $\sln$ and translation invariant valuation on $\CV$.
\end{lemma}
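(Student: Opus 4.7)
The plan is to verify the three required properties in turn. For $\sln$ and translation invariance, observe that every $\phi\in\sln$ and every translation is a bijection of $\R^n$, so $u\circ\phi^{-1}$ and $u\circ\tau^{-1}$ attain the same minimum as $u$ and the functional (\ref{eq:generalized_euler_char}) is manifestly unchanged. (Recall that $\min_{x\in\Rn}u(x)$ is indeed attained and finite because $u\in\CV$ is proper, lower semicontinuous and coercive.)

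For the valuation property, fix $u,v\in\CV$ with $u\wedge v,u\vee v\in\CV$. I would reduce everything to the multiset identity
$$\{\min u,\min v\}=\{\min(u\wedge v),\min(u\vee v)\},$$
after which applying $\zeta$ to each element and summing gives (\ref{valuation}). The easy half is $\min(u\wedge v)=\min(\min u,\min v)$. The substantive half is to show $\min(u\vee v)=\max(\min u,\min v)$. Here I would use that, since $u\wedge v$ is convex and lower semicontinuous, each sublevel set $\{u\wedge v\le t\}=\{u\le t\}\cup\{v\le t\}$ is convex; but a union of two convex sets is convex only if one contains the other, so for every relevant $t$ the sublevel sets $\{u\le t\}$ and $\{v\le t\}$ are nested. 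Assuming without loss of generality $\min u\le\min v$ and setting $t=\min v$, a short case split on which of the two sublevel sets is larger exhibits a point $x_0$ with $u(x_0)\le\min v$ and $v(x_0)=\min v$, so that $(u\vee v)(x_0)=\min v$; combined with the obvious bound $u\vee v\ge v$, this yields $\min(u\vee v)=\min v=\max(\min u,\min v)$.

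For continuity, let $u_k\eto u$ in $\CV$. The upper bound $\limsup_k\min u_k\le\min u$ is immediate from condition (ii) in the definition of epi-convergence applied at a minimizer $x_0$ of $u$: the sequence $x_k\to x_0$ with $u_k(x_k)\to u(x_0)=\min u$ satisfies $\min u_k\le u_k(x_k)$. For the reverse inequality $\liminf_k\min u_k\ge\min u$, choose minimizers $y_k$ of $u_k$ and invoke the uniform cone property (Lemma \ref{le:un_cone}) to get $a>0$ and $b\in\R$ with $\min u_k=u_k(y_k)>a\,\vert y_k\vert+b$; since $\min u_k$ is bounded above by the upper-bound argument, the sequence $\{y_k\}$ is bounded, and after passing to a subsequence realizing the liminf and converging to some $y^\ast$, condition (i) of epi-convergence gives $\min u\le u(y^\ast)\le\liminf_k\min u_k$. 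Continuity of $\zeta$ then yields $\zeta(\min u_k)\to\zeta(\min u)$.

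The main obstacle is the identity $\min(u\vee v)=\max(\min u,\min v)$: it is false for arbitrary convex $u,v$ and genuinely requires the hypothesis $u\wedge v\in\CV$, via the convexity-forces-nesting argument on sublevel sets. Everything else is essentially bookkeeping built from the epi-convergence axioms and the uniform cone property.
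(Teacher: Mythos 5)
Your justification of the key identity $\min_{x\in\Rn}(u\vee v)(x)=\max\{\min_{x\in\Rn}u(x),\min_{x\in\Rn}v(x)\}$ contains a genuine error: it is not true that a union of two convex sets is convex only if one contains the other, and the nesting of the sublevel sets $\{u\le t\}$ and $\{v\le t\}$ can genuinely fail under the hypotheses of the lemma. For instance, in $\R^2$ take $K=[0,2]\times[0,1]$, $L=[1,3]\times[0,1]$ and $u=\mathrm{dist}(\cdot,K)$, $v=\mathrm{dist}(\cdot,L)$; then $u\wedge v=\mathrm{dist}(\cdot,K\cup L)\in\C{2}$ because $K\cup L=[0,3]\times[0,1]$ is convex, yet $\{u\le 0\}=K$ and $\{v\le 0\}=L$ are not nested. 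So the case split ``which of the two sublevel sets is larger'' does not go through as stated. What you actually need is weaker and is true: assume $\min u\le\min v$ and set $t=\min v$. The sets $\{u\le t\}$ and $\{v\le t\}$ are nonempty and closed, and their union $\{u\wedge v\le t\}$ is convex (as $u\wedge v\in\CV$ and $t\ge\min(u\wedge v)$), hence connected; two disjoint nonempty closed sets cannot have connected union, so the two sublevel sets intersect. Any $x_0$ in the intersection satisfies $v(x_0)=\min v$ and $u(x_0)\le\min v$, hence $(u\vee v)(x_0)=\min v$, which together with $u\vee v\ge v$ gives the identity. With this repair your argument is complete; the paper itself does not prove the identity but quotes it from \cite[Lemma 3.7]{CavallinaColesanti}.

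The remaining parts are fine. The invariance argument is the same as the paper's. Your continuity argument (recovery sequence at a minimizer for $\limsup_k\min u_k\le\min u$, and the uniform cone property of Lemma \ref{le:un_cone} to bound the minimizers and extract a convergent subsequence for the reverse inequality) is correct and is a different, more hands-on route than the paper's, which deduces $\min u_k\to\min u$ from the Hausdorff convergence of sublevel sets (Lemma \ref{le:hd_conv_lvl_sets}); your version has the mild advantage of not invoking that lemma, at the cost of redoing a compactness argument the paper gets for free.
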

\begin{proof}
Let $u\in\CV$. Since
\begin{equation*}
\min_{x\in\Rn} u(x) = \min_{x\in\Rn} u(\tau x) = \min_{x\in\Rn} u(\phi^{-1} x),
\end{equation*}
for every $\phi\in\sln$ and translation $\tau:\Rn\to\Rn$,  (\ref{eq:generalized_euler_char}) defines an $\sln$ and translation invariant map. If $u,v\in\CV$ are such that $u\wedge v\in\CV$, then clearly
\begin{equation*}
\min_{x\in\Rn} (u\wedge v)(x) = \min \{\min_{x\in\Rn} u(x), \min_{x\in\Rn} v(x) \}.
\end{equation*}
By \cite[Lemma 3.7]{CavallinaColesanti} we have
\begin{equation*}
\min_{x\in\Rn} (u\vee v)(x) = \max \{\min_{x\in\Rn} u(x), \min_{x\in\Rn} v(x) \}.
\end{equation*}
Hence, a function $\zeta\in C(\R)$ composed with the minimum of a function $u\in\CV$ defines a valuation on $\CV$. The continuity of \eqnref{eq:generalized_euler_char} follows from Lemma~\ref{le:hd_conv_lvl_sets}.
\end{proof}

Let $\zeta\in C(\R)$ be non-negative. For $u\in \CV$, define
\begin{equation*}
\oZ_\zeta(u)=\int_{\dom u} \zeta(u(x)) \d x.
\end{equation*}
We want to investigate conditions on $\zeta$ such that $\oZ_\zeta$ defines a continuous valuation on $\CV$.

It is easy to see, that in order for $\oZ_\zeta(u)$ to be finite for every $u\in\CV$, it is necessary for $\zeta$ to have {finite $(n-1)$-st moment}. 
Indeed, if $u(x)=|x|$, then
\begin{equation}
\label{eq:finite_moment_necessary}
\oZ_\zeta(u)=\int_{\Rn} \zeta(|x|)\d x = n\,v_n \int_0^{+\infty} t^{n-1} \zeta(t) \d t,
\end{equation}
where $v_n$ is the volume of the $n$-dimensional unit ball. We will see in Lemma \ref{le:mu_f_moment}, that this condition is also sufficient. For this, we require the following result.

\begin{lemma}
\label{le:val_small_on_complement_of_lvl_sets}
Let $u_k$ be a sequence in $\CV$ with epi-limit $u\in\CV$. If $\zeta\in C(\R)$ is non-negative with finite $(n-1)$-st moment, then, for every $\e>0$, there exist $t_0\in\R$ and $k_0\in\N$ such that
\begin{equation*}
\int_{\dom u \cap \{u>t\}} \zeta(u(x))\d x < \e \,\,\,\text{ and }\,\,\, \int_{\dom u_k\cap \{u_k >t\}} \zeta(u_k(x)) \d x < \e
\end{equation*}
for every  $t\geq t_0$ and $k\geq k_0$.
\end{lemma}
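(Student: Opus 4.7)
\emph{Plan.} The approach is to pass the two-dimensional integrals to one-dimensional integrals against the sublevel set volume functions $V_k(s) := |\{u_k \le s\}|$ and $V(s) := |\{u \le s\}|$, and to dominate them by the tail of $\int_0^\infty s^{n-1}\zeta(s)\,ds$, which vanishes by hypothesis. Uniformity in $k$ will come from the uniform cone estimate of Lemma \ref{le:un_cone}, coupled with the convexity of sublevel sets via the Brunn-Minkowski inequality.

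First I apply Lemma \ref{le:un_cone} to obtain $a > 0$ and $b \in \R$ with $u_k(x), u(x) > a|x|+b$ for every $k \in \N$ and $x \in \Rn$. This yields the cone bound $V_k(s), V(s) \le v_n((s-b)/a)^n$ for $s \ge b$. Applying Lemma \ref{le:hd_conv_lvl_sets} at levels slightly above and below $\min_{x\in\Rn} u(x)$ shows $\min u_k \to \min u$, so I fix $k_0$ with $\min u_k \le \min u + 1$ for $k \ge k_0$, and set $s_* := \min u + 2$.

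The key step is a uniform pointwise derivative bound $V_k'(s) \le C_1(s-b)^{n-1}$ for $s \ge s_*$ and $k \ge k_0$ (and analogously for $V'$). Since each $\{u_k \le s\}$ is convex and $\alpha\{u_k \le s_1\} + (1-\alpha)\{u_k \le s_2\} \subseteq \{u_k \le \alpha s_1 + (1-\alpha)s_2\}$ by convexity of $u_k$, the Brunn-Minkowski inequality forces $V_k^{1/n}$ to be concave on $[\min u_k, \infty)$, hence absolutely continuous on the open interior with non-increasing derivative. The concavity inequality at $s_*$, together with $V_k^{1/n}(\min u_k) \ge 0$, yields $(V_k^{1/n})'(s_*) \le V_k^{1/n}(s_*)/(s_* - \min u_k) \le v_n^{1/n}(s_*-b)/a$, a constant independent of $k \ge k_0$; monotonicity of $(V_k^{1/n})'$ then extends this to all $s \ge s_*$. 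Combined with the cone upper bound on $V_k^{1/n}$ and the identity $V_k' = n V_k^{(n-1)/n}(V_k^{1/n})'$, this gives the claimed pointwise estimate.

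To finish, for $t \ge s_*$ (hence $t > \min u_k$ for $k \ge k_0$), the pushforward of Lebesgue measure under $u_k$ together with the absolute continuity of $V_k$ on $(\min u_k, \infty)$ gives
\[
\int_{\dom u_k \cap \{u_k > t\}} \zeta(u_k(x))\,dx = \int_t^\infty \zeta(s) V_k'(s)\,ds \le C_1 \int_t^\infty \zeta(s)(s-b)^{n-1}\,ds,
\]
and the same estimate for $u$. The finite $(n-1)$-st moment makes the right-hand side vanish as $t \to \infty$, so I pick $t_0 \ge s_*$ with $C_1 \int_{t_0}^\infty \zeta(s)(s-b)^{n-1}\,ds < \e$. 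The main obstacle is the derivative bound: the cone by itself provides no pointwise control on $V_k'$, and a naive layer-sum using only the cone would cost an extra power of $s$, producing the $n$-th moment rather than the $(n-1)$-st; the concavity of $V_k^{1/n}$ coming from Brunn-Minkowski is what closes that gap.
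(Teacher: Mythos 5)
Your proof is correct, and it takes a genuinely different route from the paper's. The paper also starts from the uniform cone property (Lemma \ref{le:un_cone}) and from epi-convergence at a minimizer (choosing $x_k\to 0$ with $u_k(x_k)\le u(0)+1$, then translating $u_k$ by $x_k$), but it then argues direction by direction in polar coordinates: for each $\omega\in\Sph$ the radial function $v_\omega(r)=u(r\omega)$ is convex, on $\{u>t\}$ for $t\ge\bar t_0$ its subgradient is bounded below by $a/2$ (this is where the cone constants and the closeness of the minima enter), the cone bound converts $r^{n-1}$ into $c\,u(r\omega)^{n-1}$, and the substitution $s=v_\omega(r)$ gives $\int_{\dom u\cap\{u>t\}}\zeta(u)\,dx\le \tfrac{2nv_nc}{a}\int_t^{\infty}s^{n-1}\zeta(s)\,ds$, uniformly in $k$. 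You replace this radial bookkeeping by a global argument on the distribution function: Brunn--Minkowski makes $s\mapsto V_k(s)^{1/n}$ concave, the chord from $\min u_k$ to $s_*=\min u+2$ (with $s_*-\min u_k\ge 1$ for $k\ge k_0$, which is your analogue of the paper's $u_k(x_k)\le u(0)+1$) bounds $(V_k^{1/n})'$ uniformly in $k$, and the cone bound on $V_k$ upgrades this to $V_k'(s)\le C_1(s-b)^{n-1}$ for a.e.\ $s\ge s_*$; the pushforward identity then yields the same kind of tail bound, and $(s-b)^{n-1}\le 2^{n-1}s^{n-1}$ for large $s$ ties it to the finite $(n-1)$-st moment. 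Both proofs thus rest on the same two inputs (uniform cone property, uniform control of the minima) and end with a constant times $\int_t^{\infty}s^{n-1}\zeta(s)\,ds$; yours is shorter and avoids one-dimensional subgradient arguments at the price of invoking Brunn--Minkowski, while the paper's is more elementary. When writing yours up, phrase the derivative estimates via right derivatives or almost everywhere (the concave function $V_k^{1/n}$ need not be differentiable at $s_*$), and justify the identity $\int_{\dom u_k\cap\{u_k>t\}}\zeta(u_k)\,dx=\int_t^{\infty}\zeta(s)V_k'(s)\,ds$ by noting that $V_k$ is locally Lipschitz on $(\min u_k,\infty)$ (a power of a nonnegative concave function), so the pushforward of Lebesgue measure under $u_k$ restricted to $(t,\infty)$ is absolutely continuous with density $V_k'$; these are routine repairs and do not affect the argument.
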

\begin{proof}
Without loss of generality, let $\min_{x\in\Rn}u(x)=u(0)$. By the definition of epi-con\-ver\-gence, there exists a sequence $x_k$ in $\Rn$ such that $x_k\to 0$ and $u_k(x_k)\to u(0)$. Therefore, there exists $k_0\in\N$ such that $|x_k|<1$ and $u_k(x_k) < u(0)+1$ for every $k\geq k_0$. By Lemma \ref{le:un_cone}, there exist constants $a>0$ and $\bar{b}\in\R$, such that
\begin{equation*}
u(x),u_k(x) > a|x|+\bar{b},
\end{equation*}
for every $x\in\Rn$ and $k\in\N$. Setting $\widetilde{u}_k(x)=u_k(x-x_k)$, we have
\begin{equation*}
\widetilde{u}_k(x)>a|x-x_k|+\bar{b}\geq a |x|-a|x_k|+\bar{b} \geq a |x|+ (\bar{b}-a),
\end{equation*}
for every $k\geq k_0$. Hence, with $b=\bar{b}-a$, we have
\begin{equation}\label{uutildelower}
u(x),\widetilde{u}_k(x)>a|x|+b,
\end{equation}
for every $x\in\Rn$ and $k\geq k_0$.

We write $x=r\omega$ with $r\in[0,+\infty)$ and $\omega \in\Sph$.  For $u(r\omega)\ge 1$, we obtain from (\ref{uutildelower}) that
\begin{equation}\label{absch}
r^{n-1} < \left( \frac{u(r\omega)}{a}-\frac{b}{a}\right)^{n-1} \le c\, u(r\omega)^{n-1},\quad 
r^{n-1} < c\, \widetilde{u}_k(r\omega)^{n-1},
\end{equation}
for every $r\in[0,+\infty)$, $\omega \in \Sph$ and $k\geq k_0$, where $c$ only depends on $a,b$ and the dimension $n$. Now choose $\bar{t}_0 \geq \max\{1,2(u(0)+1)-b\}$. Then for all $t\geq \bar{ t}_0$ 
\begin{equation}\label{half}
\frac{t-u(0)}{t-b} \geq \frac{1}{2},\quad \frac{t-(u(0)+1)}{t-b}\geq \frac{1}{2}.
\end{equation}

For $\omega \in \Sph$, let $v_\omega(r)=u(r\omega)$. The function $v_\omega$ is non-decreasing  and convex  on $[0,+\infty)$. So, in particular, the left and right derivatives, $v'_{\omega,l}, v'_{\omega,r}$ of $v_\omega$ exist and for the
subgradient $\partial v_\omega (r) = [v'_{\omega,l}, v'_{\omega,r}]$, it follows from $r<\bar r$ that $\eta \le \bar \eta$ for $\eta\in\partial v_\omega (r)$ and $\bar \eta \in \partial v_\omega (\bar r)$. 

For $t\geq \bar{t}_0$, set
\begin{equation*}
D_\omega(t)=\{r\in[0,+\infty)\,:\, t< u(r\omega)<+\infty\}.
\end{equation*}
For every $\omega\in\Sph$, the set $D_\omega(t)$ is either empty or there exists 
\begin{equation}\label{absch2}
r_\omega(t)=\inf D_\omega(t) \leq \tfrac{t-b}{a}
\end{equation}
and $v_\omega(r_\omega(t))=t$. Therefore, if $D_\omega(t)$ is non-empty, we have 
\begin{equation*}
t-u(0) \le \xi \,r_\omega(t)
\end{equation*}
for $\xi\in \partial v_\omega(r_\omega(t))$. Hence, it follows from (\ref{absch2}) and (\ref{half}) that
\begin{equation}\label{lower}
\vartheta \geq  \xi \geq \frac{t-u(0)}{r_\omega(t)} \geq \frac{a(t-u(0))}{t-b} \geq \frac{a}{2},
\end{equation}
for all $r\in D_\omega(t)$, $\vartheta\in \partial v_\omega(r)$ and $\xi \in \partial v_\omega(r_\omega(t))$. Similarly, setting $\widetilde{v}_{k,\omega}(r)= \widetilde{u}_k(r\omega)$ and
\begin{equation*}
\widetilde{D}_{k,\omega}(t)=\{r\in[0,+\infty)\,:\, t< u_k(r\omega)<+\infty\},
\end{equation*}
it is easy to see that $\widetilde{v}_{k,\omega}$ is convex on $[0,+\infty)$ and monotone increasing on $\widetilde{D}_{k,\omega}(t)$ for all $k\geq k_0$. By the choice of $\bar{t}_0$ and (\ref{half}), for $t\ge \bar{t}_0$
\begin{equation*}
\vartheta \geq \frac{a}{2},
\end{equation*}
for all $r\in D_{k,\omega}(t)$, $k\geq k_0$ and $\vartheta \in\partial \widetilde{v}_{k,\omega}(r)$. Recall, that as a convex function $v_\omega$ is locally Lipschitz and differentiable almost everywhere on the interior of its domain. Using polar coordinates, (\ref{absch}) and the substitution $v_\omega(r)=s$, we obtain from (\ref{lower}) that
\begin{equation}\label{eq:bar_c}
\begin{array}{rcl}
\displaystyle
\int_{\dom u\cap \{u>t\}} \zeta(u(x))\d x &=& \displaystyle  \int_{\Sph} \int_{D_\omega(t)} r^{n-1} \zeta(v_\omega(r)) \d r \d \omega \\[12pt]
&\leq& \displaystyle c \int_{\Sph} \int_{D_\omega(t)} v_\omega(r)^{n-1} \zeta(v_\omega(r)) \d r \d \omega \\[12pt]
&\leq& \displaystyle  \frac{2 \,n\,v_n\,c}{a} \int_{t}^{+\infty} s^{n-1} \zeta(s) \d s
\end{array}
\end{equation}
for every $t\geq \bar{t}_0$. In the same way,
\begin{multline*}
\int_{\dom u_k \cap \{u_k >t\}} \zeta(u_k(x)) \d x = \int_{\dom \widetilde{u}_k \cap \{\widetilde{u}_k >t\}} \zeta(\widetilde{u}_k(x)) \d x
\leq  \frac{2 \,n\,v_n\,c}{a}  \int_{t}^{+\infty} s^{n-1} \zeta(s) \d s,
\end{multline*}
for every $t\geq \bar{t}_0$ and $k\geq k_0$ with the same constant ${c}$ as in (\ref{eq:bar_c}). The statement now follows, since $\zeta$ is non-negative and has finite $(n-1)$-st moment.
\end{proof}

\goodbreak

\begin{lemma}
\label{le:mu_f_moment}
Let $\zeta\in C(\R)$ be non-negative. Then $\oZ_\zeta(u)<+\infty$ for every $u\in\CV$ if and only if $\zeta$ has finite $(n-1)$-st moment.
\end{lemma}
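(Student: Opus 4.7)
The forward direction (necessity) is already essentially contained in equation (\ref{eq:finite_moment_necessary}): the function $u(x)=\vert x\vert$ lies in $\CV$, so if $\oZ_\zeta(u)<+\infty$ is required for every element of $\CV$, then in particular $n\,v_n\int_0^{+\infty} t^{n-1}\zeta(t)\,dt = \oZ_\zeta(\vert\cdot\vert) < +\infty$, which is exactly the finiteness of the $(n-1)$-st moment.

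For sufficiency, I would fix $u\in\CV$ and split the integral at some level $t_0>\umin$ to be chosen:
\begin{equation*}
\oZ_\zeta(u) = \int_{\dom u\cap\{u\le t_0\}}\zeta(u(x))\,dx \,+\, \int_{\dom u\cap\{u> t_0\}}\zeta(u(x))\,dx.
\end{equation*}
The first term is handled using that $u\in\CV$ is coercive and lower semicontinuous, so $\{u\le t_0\}\in\cK^n$ is compact; since $\zeta$ is continuous and $u$ takes values in the compact interval $[\umin,t_0]$ on this sublevel set, $\zeta\circ u$ is bounded, and the first term is at most $(\max_{[\umin,t_0]}\zeta)\cdot V_n(\{u\le t_0\})<+\infty$.

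For the second term, I would invoke Lemma \ref{le:val_small_on_complement_of_lvl_sets} applied to the constant sequence $u_k=u$ (which trivially epi-converges to $u$). That lemma provides a $t_0$ for which the tail integral is less than any prescribed $\e>0$ and, in particular, finite. Alternatively, one can just reuse the estimate leading to (\ref{eq:bar_c}) directly from the cone property of Lemma \ref{le:cone}: writing $u(x)>a\vert x\vert+b$ with $a>0$, passing to polar coordinates, and using the substitution $s=u(r\omega)$ along the radial direction yields
\begin{equation*}
\int_{\dom u\cap\{u>t_0\}}\zeta(u(x))\,dx \,\le\, \frac{2\,n\,v_n\,c}{a}\int_{t_0}^{+\infty}s^{n-1}\zeta(s)\,ds,
\end{equation*}
which is finite by the moment hypothesis. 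Summing the two bounds gives $\oZ_\zeta(u)<+\infty$. The only step that requires genuine care is the tail bound, and since it has already been done inside the proof of Lemma \ref{le:val_small_on_complement_of_lvl_sets}, the argument amounts essentially to packaging that computation for a single function; the compact-sublevel-set part is routine.
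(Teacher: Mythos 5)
Your proposal is correct and follows essentially the same route as the paper: necessity via the computation for $u(x)=\vert x\vert$ in (\ref{eq:finite_moment_necessary}), and sufficiency by splitting at a level $t$, bounding the integral over $\{u\le t\}$ by $\max_{[\umin,t]}\zeta\cdot V_n(\{u\le t\})$, and controlling the tail with Lemma \ref{le:val_small_on_complement_of_lvl_sets} (the paper applies that lemma exactly as you do, the statement for $u$ itself sufficing). Your alternative remark about rederiving the tail bound directly from the cone property of Lemma \ref{le:cone} is just the computation already inside Lemma \ref{le:val_small_on_complement_of_lvl_sets}, so it is not a genuinely different argument.
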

\begin{proof}
As already pointed out in (\ref{eq:finite_moment_necessary}), it is necessary for $\zeta$ to have finite $(n-1)$-st moment in order for $\oZ_\zeta$ to be finite.

Now let $u\in\CV$ be arbitrary, let $\zeta$ have finite $(n-1)$-st moment and let $\umin=\min_{x\in\Rn}u(x)$. By Lemma \ref{le:val_small_on_complement_of_lvl_sets}, there exists $t\in\R$ such that
\begin{equation*}
\int_{\dom u \cap \{u > t\}} \zeta(u(x))\d x \leq 1.
\end{equation*}
It follows that
\begin{align*}
\oZ_\zeta(u) &= \int_{\dom u} \zeta(u(x)) \d x\\
&= \int_{\{u \leq t\}} \zeta(u(x))\d x + \int_{\dom u \cap \{u > t\}} \zeta(u(x))\d x\\[3pt]
&\leq \max\nolimits_{s\in[ \umin,t]} \zeta(s) \,V_n(\{u \leq t\}) + 1
\end{align*}
and hence  $\oZ_\zeta(u)<\infty$.
\end{proof}

\begin{lemma}
\label{le:mu_f_is_cont}
For $\zeta\in C(\R)$ non-negative and with finite $(n-1)$-st moment, the functional $\oZ_\zeta$ is continuous on $\CV$.
\end{lemma}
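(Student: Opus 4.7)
The plan is to show, for $u_k\eto u$ in $\CV$ and $\varepsilon>0$, that $|\oZ_\zeta(u_k)-\oZ_\zeta(u)|<\varepsilon$ for $k$ large. First I would apply Lemma~\ref{le:val_small_on_complement_of_lvl_sets} to select a level $t>\min_{x\in\Rn}u(x)$ and an index $k_0$ such that the integrals of $\zeta\circ u$ over $\dom u\cap\{u>t\}$ and of $\zeta\circ u_k$ over $\dom u_k\cap\{u_k>t\}$ are each less than $\varepsilon/4$ for $k\geq k_0$. Since $t\neq \min u$, Lemma~\ref{le:hd_conv_lvl_sets} yields that the compact convex sets $K_k:=\{u_k\leq t\}$ converge in the Hausdorff metric to $K:=\{u\leq t\}$, so it remains to prove $\int_{K_k}\zeta(u_k)\,dx\to\int_K\zeta(u)\,dx$. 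By the uniform cone property (Lemma~\ref{le:un_cone}) there exist $a>0$ and $b\in\R$ with $u,u_k>a|x|+b$; hence $u$ on $K$ and each $u_k$ on $K_k$ take values in $[b,t]$, and $M:=\max_{s\in[b,t]}\zeta(s)<\infty$ by continuity of $\zeta$.

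I would then split into two cases according to the dimension of $K$. If $\dim K<n$, then $V_n(K)=0$ and hence $\int_K\zeta(u)\,dx=0$. Upper semicontinuity of Lebesgue volume on $\cK^n$ (which follows from $K_k\subseteq K+\varepsilon' B$ eventually, so that $V_n(K_k)\leq V_n(K+\varepsilon' B)\to V_n(K)=0$ as $\varepsilon'\to 0$ by the Steiner formula) gives $V_n(K_k)\to 0$, and therefore $\int_{K_k}\zeta(u_k)\,dx\leq M\,V_n(K_k)\to 0$, closing this case.

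If instead $\Int K\neq\emptyset$, then $\dom u$ has non-empty interior and every $x\in\Int K\subseteq\Int\dom u$ satisfies $u(x)<t$ by (\ref{le:level_sets}). Lemma~\ref{thm:g_comp} then gives $u_k(x)\to u(x)$ pointwise on $\Int K$, and from $u(x)<t$ it follows that $x\in K_k$ for all $k$ large. Hence $\chi_{K\cap K_k}(x)\,[\zeta(u_k(x))-\zeta(u(x))]\to 0$ for a.e.\ $x\in K$, and the bound $2M\chi_K\in L^1$ together with dominated convergence yields $\int_{K\cap K_k}|\zeta(u_k)-\zeta(u)|\,dx\to 0$. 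Continuity of volume on $\cK^n$ under Hausdorff convergence forces $V_n(K\triangle K_k)\to 0$, which controls the boundary contributions via $\int_{K_k\setminus K}\zeta(u_k)\,dx+\int_{K\setminus K_k}\zeta(u)\,dx\leq M\,V_n(K\triangle K_k)\to 0$. Combining these bounds gives $\int_{K_k}\zeta(u_k)\,dx\to\int_K\zeta(u)\,dx$.

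The main obstacle is the integrand on $K\cap K_k$: uniform convergence of $u_k$ to $u$ can fail near the boundary of $\dom u$, so the argument must rely on pointwise a.e.\ convergence on $\Int K$ together with the uniform $L^\infty$ bound $M$ supplied by the cone estimate in order to invoke dominated convergence. The truncation from Lemma~\ref{le:val_small_on_complement_of_lvl_sets} is what reduces the problem from the potentially unbounded domains $\dom u$ and $\dom u_k$ to the compact, Hausdorff-convergent sublevel sets $K$ and $K_k$ on which the above analysis is possible.
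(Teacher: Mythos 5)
Your proof is correct and follows the paper's overall strategy: truncate the tail via Lemma~\ref{le:val_small_on_complement_of_lvl_sets}, reduce to integrals over the Hausdorff-convergent sublevel sets $K_k=\{u_k\le t\}$ and $K=\{u\le t\}$ (Lemma~\ref{le:hd_conv_lvl_sets}), bound the integrand uniformly by $M=\max_{[b,t]}\zeta$ using the cone property (Lemma~\ref{le:un_cone}), and split on $\dim K$; the low-dimensional case is handled identically. The only deviation is the final step in the full-dimensional case. The paper fixes $\e>0$, chooses a compact $C\subset\Int K\cap K_k$ whose complement in both $K$ and $K_k$ has small volume, and then uses \emph{uniform} convergence of $u_k$ on $C$ (Lemma~\ref{thm:g_comp}) together with uniform continuity of $\zeta$ on $[b,t]$ to make all three error terms explicitly less than $\e/3$. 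You instead invoke dominated convergence on the varying sets $K\cap K_k$ with dominant $2M\chi_K$, using pointwise a.e.\ convergence on $\Int K$, and control the two one-sided pieces by $M\,V_n(K\triangle K_k)\to0$. Both work; the paper's version stays entirely at the level of uniform estimates and explicit $\e$-bookkeeping, while yours is somewhat shorter at the cost of invoking DCT and the fact that Hausdorff convergence of convex bodies to a body with nonempty interior forces $V_n(K\triangle K_k)\to0$ (standard, but worth a line of justification: $V_n(K\cup K_k)\to V_n(K)$ since $K\cup K_k\subset K+\e_k B$, and $V_n(K\cap K_k)=V_n(K)+V_n(K_k)-V_n(K\cup K_k)\to V_n(K)$).
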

\begin{proof}
Let $u\in\CV$ and let $u_k$ be a sequence in $\CV$ such that $u_k\eto u$. Set $\umin=\min_{x\in\Rn}u(x)$.
By Lemma \ref{le:val_small_on_complement_of_lvl_sets}, it is enough to show that
\begin{equation*}
\int_{\{u_k \leq t\}} \zeta(u_k(x))\d x \to \int_{\{u \leq t\}} \zeta(u(x))\d x
\end{equation*}
for every fixed $t>\umin$. 
Lemma \ref{le:hd_conv_lvl_sets} implies  that $\{u_k \leq t\} \to \{u\leq t\}$ in the Hausdorff metric. By Lemma \ref{le:un_cone}, there exists $b\in\R$ such that $u(x),u_k(x) > b$ for $x\in\Rn$ and $k\in\N$. 
Set $c=\max_{s\in [b,t]} \zeta(s)\geq 0$.
We  distinguish the following cases. 

First, let $\dim (\dom u) <n$. 	In this case $V_n(\{u\leq t\})=0$ and since volume is continuous on convex sets, \mbox{$V_n(\{u_k \leq t\})\to 0$}. Hence,
	\begin{equation*}
	0 \leq  \int_{\{u_k\leq t\}} \zeta(u_k(x))\d x \leq c\,V_n(\{u_k \leq t\})  \to 0.
	\end{equation*}
	
\goodbreak
Second, let  $\dim (\dom u) = n$.
	In this case, $\{u\leq t\}$ is a set in $\cK^n$ with non-empty interior. Therefore, for $\e>0$ there exist $k_0\in\N$ and $C\in\cK^n$ such that for every $k\geq k_0$ the following hold:
	\begin{equation*}
	C \subset \Int(\{u\leq t\})\cap \{u_k \leq t\},
	\end{equation*}
	\begin{equation*}
	V_n(\{u\leq t\} \cap C^c) \leq \frac{\e}{3 c},
	\end{equation*}
	\begin{equation*}
	V_n(\{u_k\leq t\} \cap C^c) \leq \frac{\e}{3 c},
	\end{equation*}
where $C^c$ is the complement of $C$.
	Note, that $u(x), u_k(x) \in [b,t]$ for  $x\in C$ and $k\geq k_0$. Since $C\subset \Int \dom u$, Lemma \ref{thm:g_comp} implies  that $u_k$ converges to $u$ uniformly on $C$. Since $\zeta$ is continuous, the restriction of $\zeta$ to $[b,t]$ is uniformly continuous. Hence, $\zeta\circ u_k$ converges uniformly to $\zeta\circ u$ on $C$. Therefore, there exists $k_1\geq k_0$ such that
	\begin{equation*}
	|\zeta(u(x))-\zeta(u_k(x))| \leq \frac{\e}{3 V_n(C)},
	\end{equation*}
for all $x\in C$ and $k\geq k_1$. 
This gives
	\begin{align*}
	&\Big|\int_{\{u\leq t\}}  \zeta(u(x)) \d x - \int_{\{u_k \leq t\}} \zeta(u_k(x)) \d x \Big|\\
	&\leq \int_C |\zeta(u(x)) - \zeta(u_k(x))| \d x + \int_{\{u\leq t\} \cap C^c} \zeta(u(x)) \d x + \int_{\{u_k \leq t\} \cap C^c} \zeta(u_k(x)) \d x \\
	&\leq V_n(C) \frac{\e}{3V_n(C)} + c \frac{\e}{3c} +  c \frac{\e}{3c} = \e,
	\end{align*}
	for  $k\geq k_1$. The statement now follows, since $\e>0$ was arbitrary.
\end{proof}

\begin{lemma}
\label{le:muf_is_a_val}
For $\zeta\in C(\R)$ non-negative and with finite $(n-1)$-st moment,  the functional $\oZ_\zeta$ is an  $\sln$ and translation invariant valuation on $\CV$.
\end{lemma}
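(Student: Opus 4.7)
The plan is to verify the three properties (SL$(n)$ invariance, translation invariance, and the valuation identity) separately, with finiteness of $\oZ_\zeta$ already guaranteed by Lemma \ref{le:mu_f_moment}. Both invariance properties reduce to the change-of-variables formula. For $\phi \in \sln$, one has $\det \phi = \pm 1$ and $\dom(u \circ \phi^{-1}) = \phi(\dom u)$; substituting $y = \phi^{-1} x$ gives
\begin{equation*}
\oZ_\zeta(u \circ \phi^{-1}) = \int_{\phi(\dom u)} \zeta(u(\phi^{-1}x))\d x = \int_{\dom u} \zeta(u(y))\d y = \oZ_\zeta(u).
\end{equation*}
The translation case is analogous, using that the Lebesgue measure is translation invariant.

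For the valuation identity, suppose $u, v, u\vee v, u\wedge v \in \CV$. The main observation is the pointwise identity
\begin{equation*}
\zeta((u\vee v)(x)) + \zeta((u\wedge v)(x)) = \zeta(u(x)) + \zeta(v(x))
\end{equation*}
which holds for every $x \in \dom u \cap \dom v$, simply because $\{(u\vee v)(x),(u\wedge v)(x)\} = \{u(x),v(x)\}$ as an unordered pair of real numbers. I would then integrate this identity over $C := \dom u \cap \dom v = \dom(u\vee v)$ and carefully account for the parts where only one of $u$, $v$ is finite. Setting $A := \dom u \setminus \dom v$ and $B := \dom v \setminus \dom u$, so that $\dom(u\wedge v) = A \cup B \cup C$, one has $(u\wedge v)|_A = u|_A$ and $(u\wedge v)|_B = v|_B$ since the other function equals $+\infty$ on these sets. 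Hence
\begin{equation*}
\oZ_\zeta(u\wedge v) = \int_A \zeta(u)\d x + \int_B \zeta(v)\d x + \int_C \zeta(u\wedge v)\d x,
\end{equation*}
while $\oZ_\zeta(u\vee v) = \int_C \zeta(u \vee v)\d x$, and on the other side
\begin{equation*}
\oZ_\zeta(u) + \oZ_\zeta(v) = \int_A \zeta(u)\d x + \int_B \zeta(v)\d x + \int_C \big(\zeta(u) + \zeta(v)\big)\d x.
\end{equation*}
Subtracting and invoking the pointwise identity on $C$ yields the valuation equation.

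I expect no serious obstacle here; the only subtlety is the domain bookkeeping, ensuring that the "boundary" contributions from $A$ and $B$ match on both sides of the identity. The finiteness of each integral involved is already handled by Lemma \ref{le:mu_f_moment}, so no additional integrability check is needed beyond noting that the four pieces $\int_A\zeta(u)$, $\int_B\zeta(v)$, $\int_C\zeta(u)$, $\int_C\zeta(v)$ are each dominated by $\oZ_\zeta(u)$ or $\oZ_\zeta(v)$.
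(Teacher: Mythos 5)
Your proof is correct and essentially the same as the paper's: both establish the valuation identity by a direct decomposition of the integration domains (the paper partitions via the trichotomy $\{v<u\}$, $\{u=v\}$, $\{u<v\}$ intersected with the domains, you via $\dom u\setminus\dom v$, $\dom v\setminus\dom u$, $\dom u\cap\dom v$ plus the pointwise identity on the common domain), while the invariances follow from the change-of-variables formula, which the paper leaves as ``easy to see.'' One cosmetic slip: for $\phi\in\sln$ one has $\det\phi=1$ rather than $\pm 1$, though only $\lvert\det\phi\rvert=1$ is needed for your argument.
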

\begin{proof}
It is easy to see that $\oZ_\zeta$ is $\sln$ and translation invariant. It remains to show the valuation property. Let $u,v\in\CV$  be such that $u\wedge v \in\CV$. We have
\begin{equation*}
\oZ_\zeta(u\wedge v) = \int_{\dom  v \cap \{v<u\}} \zeta(v(x)) \d x + \int_{\dom v \cap \{u=v\}} \zeta(v(x)) \d x + \int_{\dom u \cap \{u<v\} } \zeta(u(x)) \d x,
\end{equation*}
\begin{equation*}
\oZ_\zeta(u\vee v) = \int_{ \dom u \cap \{v<u\}} \zeta(u(x)) \d x + \int_{\dom u \cap \{u=v\}} \zeta(u(x)) \d x + \int_{\dom v \cap \{u < v\}} \zeta(v(x)) \d x.
\end{equation*}
Hence,
\begin{equation*}
\oZ_\zeta(u\wedge v ) + \oZ_\zeta(u\vee v) = \oZ_\zeta(u)+\oZ_\zeta(v)
\end{equation*}
and the valuation property is proved. \end{proof}

\section{Valuations on Cone and Indicator Functions}

Let $\cK^n_0$ be the set of compact convex sets which contain the origin.
For $K\in\cK^n_0$, we define the convex function $\l_K:\R^n\to [0,\infty]$ via
\begin{equation*}
\epi \l_K = \pos (K\times\{1\}),
\end{equation*}
where $\pos$ denotes the positive hull. This means that the epigraph of $\l_K$ is a cone with apex at the origin and $\{\l_K\leq t \}=t \, K$ for all $t \geq 0$. It is easy to see that $\l_K$ is an element of $\CV$ for all $K\in\cK^n_0$. We have $\dom \l_K=\Rn$ if and only if $K$ contains the origin in its interior. 
If $P\in\cP^n_0$ contains the origin in its interior,  then $\l_P\in\CVpa$. 
For $K\in\cK^n_0$ and $t\in\R$, we call the function $\ell_K +t$ a \textit{cone function} and we call the function $\Ind_K+t$ an \textit{indicator function}. Cone and indicator functions play a special role in our proof.

The next result shows that to classify continuous and translation invariant valuations on $\CV$, it is enough to know the behavior of these valuations on cone functions.
The main argument of the following lemma  is due to \cite[Lemma 8]{Ludwig:SobVal}, where it was used for functions on Sobolev spaces.

\begin{lemma}
\label{le:reduction}
Let $\<A,+\>$ be a topological abelian semigroup with cancellation law and let $\,\oZ_1, \oZ_2:\CV\to \<A,+\>$ be continuous, translation invariant valuations. If $\,\oZ_1(\l_P+t)=\oZ_2(\l_P+t)$ for every $P\in\cP_0^n$ and $t\in\R$, then $\oZ_1 \equiv \oZ_2$ on $\CV$.
\end{lemma}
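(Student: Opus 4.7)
The plan is a two-stage reduction. In the first stage, I use Lemma \ref{le:pa_dense}: every $u \in \CV$ is the epi-limit of a sequence $u_k$ in $\CVpa$, so the continuity of $\oZ_1$ and $\oZ_2$ reduces matters to showing $\oZ_1(u) = \oZ_2(u)$ for every $u \in \CVpa$.

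In the second stage, for $u \in \CVpa$ I would argue by induction on a combinatorial complexity measure of $u$, such as the number of maximal full-dimensional cells in the polyhedral subdivision of $\Rn$ on which $u$ is affine (equivalently, the number of facets of the polyhedron $\epi u$). The base case corresponds to $u$ being, after applying a translation, a cone function $\ell_P+t$; the hypothesis together with translation invariance then yields $\oZ_1(u)=\oZ_2(u)$. For the inductive step, I construct an auxiliary function $v \in \CV$ so that the valuation identity
$$
\oZ_i(u) + \oZ_i(v) \;=\; \oZ_i(u \vee v) + \oZ_i(u \wedge v), \qquad i=1,2,
$$
holds with $v$, $u \vee v$ and $u \wedge v$ all of strictly smaller combinatorial complexity than $u$. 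Cancellation in $\langle A,+\rangle$, combined with the inductive hypothesis applied to $v$, $u\vee v$ and $u\wedge v$, then yields $\oZ_1(u)=\oZ_2(u)$.

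The principal obstacle is that if $v$ is a finite convex function on all of $\Rn$, then $u \wedge v$ can be convex essentially only when one of $u$, $v$ dominates the other pointwise, in which case the valuation identity is vacuous. To produce non-trivial identities, I would therefore allow $v$ to take the value $+\infty$ on part of $\Rn$, i.e.\ to have a bounded effective domain $\dom v$; the sublevel sets of such a $v$ can be chosen to match the sublevel structure of $u$ in a way that keeps both $u \vee v$ and $u \wedge v$ in $\CV$ while stripping off pieces of $u$. This is the strategy of \cite[Lemma~8]{Ludwig:SobVal} that the lemma cites, adapted to convex functions; executing the combinatorial construction carefully enough to handle all arrangements of the affine pieces of $u$ is where I expect the bulk of the technical work to lie.
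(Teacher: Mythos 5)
Your overall skeleton coincides with the paper's: reduce to $\CVpa$ via Lemma \ref{le:pa_dense} and continuity, then induct on a combinatorial complexity of $u\in\CVpa$, exploiting the valuation identity, the hypothesis on cone functions, translation invariance, and the cancellation law. The genuine gap is that the entire substance of the lemma lies in the inductive step you defer: you assert, without a construction, that one can find $v\in\CV$ with $u\wedge v,\,u\vee v\in\CV$ and with $v$, $u\vee v$, $u\wedge v$ \emph{all} of strictly smaller complexity, proposing moreover to take $v$ with bounded effective domain. That is not how the argument is made to work, and it is doubtful as stated: keeping $u\wedge v$ convex while $\dom v$ is bounded forces a delicate matching of $v$ with $u$ along the boundary of $\dom v$, and arranging simultaneously that all three of $v$, $u\vee v$, $u\wedge v$ have strictly fewer affine cells than $u$ is precisely the difficulty you leave unresolved. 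In the paper's proof the auxiliary function is a \emph{cone} function $\l$ (a translate of $\l_P+u(x_0)$, with unbounded domain), whose epigraph is a polyhedral cone with apex at a vertex $p_0$ of $\epi u$ with maximal last coordinate, chosen inside the polyhedron $\bar U$ obtained from $\epi u$ by deleting the facet hyperplanes through $p_0$ whose facets have infinite volume. Then $u\vee\l$ is again a translate of a cone function, so it is handled directly by the hypothesis rather than by induction (its facet count need not be smaller than that of $\epi u$), and only $u\wedge\l=\bar u$ is fed to the induction hypothesis; cancellation of $\oZ_i(\l)$ finishes the step. Crucially, the quantity that decreases is not the number of cells but the number of facet hyperplanes of $\epi u$ \emph{not} passing through the unique lowest vertex $\bar p$, with base case $m=0$ meaning that $u$ itself is a translate of some $\l_P+t$.

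A second missing ingredient is a general-position reduction. The paper first uses continuity to assume $\epi u$ is non-singular (no $n$ facet hyperplanes meeting in a line parallel to $\{x_{n+1}=0\}$); this guarantees a unique bottom vertex $\bar p$ and that deleting the unbounded facets through $p_0$ again produces the epigraph of a function in $\CVpa$ with domain $\Rn$. Your sketch would need an analogous perturbation step (already for $u$ attaining its minimum on a segment) before any vertex-based or cell-counting induction can start. As it stands, the proposal identifies the correct strategy and the correct algebraic use of cancellation, but the construction that carries the induction — the choice of the auxiliary function and the verification that the lattice operations land either in the inductive class or in the class covered by the hypothesis — is exactly the part that is absent.
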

\begin{proof}
By Lemma \ref{le:pa_dense} and the continuity of $\oZ_1$ and $\oZ_2$, it suffices to show that $\oZ_1$ and $\oZ_2$ coincide on $\CVpa$. So let $u\in\CVpa$ and set $U=\epi u$. Note, that $U$ is a convex polyhedron in $\R^{n+1}$ and that none of the facet hyperplanes of $U$ is parallel to the $x_{n+1}$-axis. Here, we say that a hyperplane $H$ in $\R^{n+1}$ is a \textit{facet hyperplane} of $U$ if its intersection with the boundary of $U$ has positive $n$-dimensional Hausdorff measure. Furthermore, we call $U$ \textit{singular} if $U$ has $n$ facet hyperplanes whose intersection contains a line parallel to $\{x_{n+1}=0\}$. Since $\oZ_1$ and $\oZ_2$ are continuous, we can assume that $U$ is not singular.\par
Since $U$ is not singular and $u\in\CVpa$, there exists a unique vertex, $\bar p$ of $U$ with smallest $x_{n+1}$ coordinate. We use induction on the number $m$ of facet hyper\-planes of $U$ that are not passing through $\bar{p}$. If $m=0$, then there exist $P\in\cP_0^n$ and $t\in\R$ such that $u$ is a translate of $\l_P+t$. Since $\oZ_1$ and $\oZ_2$ are translation invariant, it follows that $\oZ_1(u) = \oZ_2(u)$.

Now let $U$ have $m>0$ facet hyperplanes that are not passing through $\bar{p}$ and assume that $\oZ_1$ and $\oZ_2$ coincide for all functions with at most $(m-1)$ such facet hyperplanes. 
Let $p_0=(x_0,u(x_0))\in\R^{n+1}$ where $x_0\in\Rn$ is a vertex of $U$ with maximal $x_{n+1}$-coordinate and let $H_1,\ldots,H_j$ be the facet hyperplanes of $U$ through $p_0$ such that the corresponding facets of $U$ have infinite $n$-dimensional volume. Note, that $H_1,\ldots,H_j$ do not contain $\bar{p}$ and therefore there is at least one such hyperplane. Define $\bar{U}$ as the polyhedron bounded by the intersection of all facet hyperplanes of $U$ with the exception of $H_1,\ldots,H_j$. Since $U$ is not singular, there exists a function $\bar{u}\in\CVpa$ with $\dom \bar{u}=\Rn$ such that $\bar{U}=\epi \bar{u}$. Note, that $\bar{U}$ has at most $(m-1)$ facet hyperplanes not containing $\bar{p}$. Hence, by the induction hypothesis 
$$\oZ_1(\bar{u})=\oZ_2(\bar{u}).$$ 
Let $\overline{H}_1,\ldots,\overline{H}_i$ be the facet hyperplanes of $\bar{U}$ that contain $p_0$ such that the corresponding facets of $\bar{U}$ have infinite $n$-dimensional volume. Choose suitable hyperplanes $\overline{H}_{i+1},\ldots,\overline{H}_k$ not parallel to the $x_{n+1}$-axis and containing $p_0$ so that the hyperplanes $\overline{H}_1,\ldots,\overline{H}_k$ bound a polyhedral cone with apex $p_0$ that is contained in $\bar U$, has $\overline{H}_1,\ldots,\overline{H}_i$ among its facet hyperplanes and contains $\{x_0\} \times [u(x_0),+\infty)$. Define $\l$ as the piecewise affine function determined by this polyhedral cone. Notice, that $\l$ is a translate of $\l_P+u(x_0)$, where $P\in\cP_0^n$ is the projection onto the first $n$ coordinates of the intersection of the polyhedral cone with $\{x_{n+1}=u(x_0)+1\}$.
Hence, $\oZ_1$ and $\oZ_2$ coincide on $\l$. Set $\bar{\l}=u\vee \l$. The epigraph of $\bar{\l}$ is again a polyhedral cone with apex $p_0$. Hence $\bar{\l}$ is a translate of $\l_{\bar{P}}+u(x_0)$ with $\bar{P}\in\cP_0^n$ since it is bounded by hyperplanes containing $p_0$ that are not parallel to the $x_{n+1}$-axis. Therefore, $\oZ_1$ and $\oZ_2$ also coincide on $\bar{\l}$. We now have
\begin{equation*}
u\wedge \l = \bar{u}, \quad u \vee \l = \bar{\l}.
\end{equation*}
From the valuation property of $\oZ_i$, $i=1,2$, we obtain
\begin{equation*}
\oZ_1(u)+\oZ_1(\l) = \oZ_1(\bar{u})+\oZ_1(\bar{\l}) = \oZ_2(\bar{u})+\oZ_2(\bar{\l})= \oZ_2(u)+\oZ_2(\l),
\end{equation*}
which completes the proof.
\end{proof}

\goodbreak

Next, we study the behavior of a continuous and $\sln$ invariant valuation on cone and indicator functions.

\begin{lemma}
\label{le:sln_grwth}
If $\,\oZ:\CV\to\R$ is a continuous and $\sln$\! invariant valuation, then there exist continuous functions $\psi_0,\psi_n,\zeta_0,\zeta_n:\R\to\R$ such that
\begin{eqnarray*}
\oZ(\l_P+t)&=&\psi_0(t)+\psi_n(t) V_n(P),\\[3pt]
\oZ(\Ind_P+t)&=&\zeta_0(t)+\zeta_n(t) V_n(P)
\end{eqnarray*}
for every $P\in\cP_0^n$ and $t\in\R$.
\end{lemma}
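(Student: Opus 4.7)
The plan is to fix $t\in\R$ and consider, for each of the two families, the induced map on $\cP_0^n$. Define $\mu_t^{\l}(P) = \oZ(\l_P+t)$ and $\mu_t^{\Ind}(P) = \oZ(\Ind_P+t)$. I would verify that each is a continuous, $\sln$ invariant valuation on $\cP_0^n$, then apply the classification \eqnref{thm:ludwig_reitzner} to conclude that each has the form $c_0 V_0 + c_n V_n$. Since $V_0 \equiv 1$ on $\cP_0^n$, the values $c_0 = c_0(t)$ and $c_n = c_n(t)$ are exactly the coefficient functions $\psi_0,\psi_n$ (respectively $\zeta_0,\zeta_n$) sought by the lemma, and continuity of $\oZ$ on $\CV$ will deliver their continuity in $t$.

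For the valuation property, I would use that if $P,Q,P\cup Q \in \cP_0^n$, then for each $s \geq 0$ the sublevel set identity $\{\l_P \wedge \l_Q \leq s\} = sP \cup sQ = s(P\cup Q) = \{\l_{P\cup Q} \leq s\}$ gives $\l_P \wedge \l_Q = \l_{P\cup Q}$, and symmetrically $\l_P \vee \l_Q = \l_{P\cap Q}$; adding the constant $t$ preserves both identities, and the analogues hold for indicator functions. For $\sln$ invariance, the map $\phi \oplus \mathrm{id}_\R$ sends $\epi \l_P = \pos(P\times\{1\})$ to $\epi \l_{\phi P}$, so $\l_P \circ \phi^{-1} = \l_{\phi P}$ (and similarly $\Ind_P \circ \phi^{-1} = \Ind_{\phi P}$), which transfers $\sln$ invariance of $\oZ$ to both functionals on $\cP_0^n$. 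For continuity, I would show that $P_k \to P$ in $\cP_0^n$ with the Hausdorff metric implies $\l_{P_k} \eto \l_P$ and $\Ind_{P_k} \eto \Ind_P$: the latter is immediate from Painlev\'e--Kuratowski convergence of $P_k\times[0,\infty) \to P\times[0,\infty)$, while the former reduces to Painlev\'e--Kuratowski convergence of the positive hulls $\pos(P_k\times\{1\}) \to \pos(P\times\{1\})$.

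Once these three properties are in hand, \eqnref{thm:ludwig_reitzner} yields the claimed decompositions with coefficient functions $\psi_0,\psi_n,\zeta_0,\zeta_n$. To see they are continuous in $t$, I would set $P = \{0\}$ to obtain $\psi_0(t) = \oZ(\l_{\{0\}}+t)$ and $\zeta_0(t) = \oZ(\Ind_{\{0\}}+t)$ (in fact these agree since $\l_{\{0\}} = \Ind_{\{0\}}$), then pick a fixed $P_0 \in \cP_0^n$ with $V_n(P_0) = 1$ to extract $\psi_n(t) = \oZ(\l_{P_0}+t) - \psi_0(t)$ and $\zeta_n(t) = \oZ(\Ind_{P_0}+t) - \zeta_0(t)$; since $t_k \to t$ implies $\l_{P_0}+t_k \eto \l_{P_0}+t$ and $\Ind_{P_0}+t_k \eto \Ind_{P_0}+t$, the epi-continuity of $\oZ$ gives continuity of all four coefficient functions. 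The main technical obstacle is the Hausdorff-to-epi continuity of $P \mapsto \l_P$: a convergent sequence $(x_k,s_k) \in \pos(P_k\times\{1\})$ could in principle escape to infinity along the cone direction. This is ruled out by uniform boundedness of the $P_k$: writing $x_k = s_k y_k$ with $y_k \in P_k$ one has $|x_k| \leq M s_k$ for some $M$ independent of $k$, so if $s_k \to 0$ then $x_k \to 0$ and the limit lies in $\pos(P\times\{1\})$, while if $s_k \to s > 0$ then $y_k = x_k/s_k \to x/s \in P$ by Hausdorff convergence, again placing the limit in $\pos(P\times\{1\})$.
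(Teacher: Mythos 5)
Your proposal is correct and follows essentially the same route as the paper: fix $t$, observe that $P\mapsto\oZ(\l_P+t)$ and $P\mapsto\oZ(\Ind_P+t)$ are continuous, $\sln$ invariant valuations on $\cP_0^n$, apply the classification \eqnref{thm:ludwig_reitzner}, and recover continuity in $t$ by evaluating at a lower-dimensional polytope and at a fixed polytope of positive volume. The only difference is that you spell out the verifications (lattice identities for cone and indicator functions, $\l_P\circ\phi^{-1}=\l_{\phi P}$, and the Hausdorff-to-epi continuity of $P\mapsto\l_P$) that the paper dismisses as easy to see.
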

\begin{proof}
For $t\in\R$, define $\oZ_t:\cP_0^n\to\R$ as
\begin{equation*}
\oZ_t(P)=\oZ(\l_P+t).
\end{equation*}
It is easy to see that $\oZ_t$ defines a continuous, $\sln$ invariant valuation on $\cP_0^n$ for every $t\in\R$. Therefore, by (\ref{thm:ludwig_reitzner}), for every $t\in\R$ there exist constants $c_{0,t},c_{n,t}\in\R$ such that
\begin{equation*}
\oZ(\l_P+t)=\oZ_t(P)=c_{0,t}+c_{n,t}V_n(P),
\end{equation*}
for every $P\in\cP_0^n$. This defines two functions $\psi_0(t)=c_{0,t}$ and $\psi_n(t)=c_{n,t}$. Taking $P\in\cP_0^n$ with $\dim P < n$, we have $V_n(P)=0$. By the continuity of $\oZ$,
\begin{equation*}
t\mapsto \oZ(\l_P+t) = \psi_0(t)
\end{equation*}
is continuous, which implies that $\psi_0$ is a continuous function. Similarly, taking $Q\in\cP_0^n$ with $V_n(Q)>0$, we see that
\begin{equation*}
t\mapsto \psi_n(t)=\frac{\oZ(\l_Q+t)-\psi_0(t)}{V_n(Q)},
\end{equation*}
can be expressed as the difference of two continuous functions and is therefore continuous itself.
Using $P\mapsto \oZ(\Ind_P+t)$ we get the  corresponding results for the functions $\zeta_0$ and $\zeta_n$.
\end{proof}

For a continuous and $\sln$ invariant valuation $\oZ:\CV\to\R$, we call the functions $\psi_0$ and $\psi_n$ from Lemma \ref{le:sln_grwth} the \textit{cone growth functions} of $\oZ$. The functions $\zeta_0$ and $\zeta_n$ are its \textit{indicator growth functions}. By Lemma \ref{le:reduction}, we immediately get the following result.

\begin{lemma}
\label{le:sln_val_determined_by_cone_grwth_functions}
Every continuous, $\sln$ and translation invariant valuation $\oZ:\CV\to\R$ is uniquely determined by its cone growth functions.
\end{lemma}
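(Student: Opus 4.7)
The proof is essentially a direct combination of Lemma \ref{le:sln_grwth} and Lemma \ref{le:reduction}, so my plan is to reduce the statement to these two ingredients rather than invent anything new.

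\smallskip

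First I would set up the uniqueness formulation: suppose $\oZ_1, \oZ_2 : \CV \to \R$ are two continuous, $\sln$ and translation invariant valuations having the same cone growth functions $\psi_0$ and $\psi_n$. The goal is to deduce $\oZ_1 \equiv \oZ_2$ on $\CV$.

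\smallskip

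Next, by the very definition of the cone growth functions provided in Lemma \ref{le:sln_grwth}, we have
\begin{equation*}
\oZ_1(\l_P + t) = \psi_0(t) + \psi_n(t)\, V_n(P) = \oZ_2(\l_P + t)
\end{equation*}
for every $P \in \cP_0^n$ and every $t \in \R$. Thus $\oZ_1$ and $\oZ_2$ coincide on all translated cone functions $\l_P + t$.

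\smallskip

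Finally, I would invoke Lemma \ref{le:reduction} with the topological abelian semigroup $\<A, +\> = \<\R, +\>$, which clearly satisfies the cancellation law. Since $\oZ_1$ and $\oZ_2$ are continuous and translation invariant valuations that agree on all functions of the form $\l_P + t$ with $P \in \cP_0^n$ and $t \in \R$, Lemma \ref{le:reduction} yields $\oZ_1 \equiv \oZ_2$ on $\CV$. This proves that $\oZ$ is uniquely determined by $\psi_0$ and $\psi_n$. There is no real obstacle here: all the work was done earlier, with the density argument of Lemma \ref{le:pa_dense} and the inductive polyhedral argument of Lemma \ref{le:reduction} carrying the technical weight.
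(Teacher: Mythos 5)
Your proposal is correct and is precisely the argument the paper intends: by Lemma \ref{le:sln_grwth} two valuations with the same cone growth functions agree on all functions $\l_P+t$ with $P\in\cP_0^n$ and $t\in\R$, and Lemma \ref{le:reduction} applied with $\<A,+\>=\<\R,+\>$ then gives $\oZ_1\equiv\oZ_2$ on $\CV$. This matches the paper, which states the lemma as an immediate consequence of Lemma \ref{le:reduction}.
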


In order to classify valuations, we want to determine how the cone growth functions and the indicator growth functions are related.

\begin{lemma}
\label{le:diff_grwth_fncts}
For $k\ge 1$, let $\oZ:\C{k}\to\R$ be a continuous, translation invariant valuation and let $\psi\in C(\R)$. If
\begin{equation}\label{ind_simple}
\oZ(\l_P+t) = \psi(t) V_k(P)
\end{equation}
for every $P\in\cP_0^k$ and $t\in\R$, then
\begin{equation*}
\oZ(\Ind_{[0,1]^k}+t) = \frac{(-1)^k}{k!} \frac{\d^k}{\d t^k} \psi(t)
\end{equation*}
for every $t\in\R$. In particular, $\psi$ is $k$-times differentiable.
\end{lemma}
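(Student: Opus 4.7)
The plan is to proceed by induction on $k$, with the base case $k=1$ containing the essential differentiation argument and the induction step reducing dimension by a ``slicing'' construction in one coordinate.

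For the base case $k=1$, I would first establish two preliminary identities using the valuation property on pairs whose min/max are both in $\CV$ because their effective domains overlap only on a common point. First, for any $x_0 \in \R$ the pair $u_1 = \l_{[-c,0]}(\cdot - x_0)+t$ and $u_2 = \l_{[0,c]}(\cdot - x_0)+t$ (with disjoint interiors of domains meeting at $x_0$ with common value $t$) gives $u_1 \wedge u_2 = \l_{[-c,c]}(\cdot - x_0)+t$ and $u_1 \vee u_2 = \Ind_{\{x_0\}}+t$, both in $\CV$; combined with the hypothesis $\oZ(\l_P+t) = \psi(t)V_1(P)$, this yields $\oZ(\Ind_{\{x_0\}}+t)=0$. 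Next, pairing $\Ind_{[0,a]}+t$ with $\Ind_{[a,a+b]}+t$ (whose $\wedge$ is $\Ind_{[0,a+b]}+t$ and whose $\vee$ is $\Ind_{\{a\}}+t$) and using continuity of $\oZ$ in $a$ gives $\oZ(\Ind_{[0,\ell]}+t)=\ell\,\phi(t)$ for a continuous function $\phi(t):=\oZ(\Ind_{[0,1]}+t)$. An analogous concatenation of $\Ind_{[0,L]}+t$ with the right-cone $\l_{[0,c]}(\cdot - L)+t$ (agreeing in value $t$ at $x=L$) produces, for the ``cone--indicator glued'' function $w_{L,c}(x;t)$ equal to $t$ on $[0,L]$, to $(x-L)/c+t$ on $[L,\infty)$ and to $+\infty$ on $(-\infty,0)$, the identity
\[
\oZ(w_{L,c}(\,\cdot\,;t)) \;=\; L\,\phi(t) + c\,\psi(t).
\]

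The heart of the base case is extracting the differential relation. The plan is to compute $\oZ(w_{L,c}(\,\cdot\,;t))$ a \emph{second} way by applying the valuation identity to a carefully chosen pair consisting of a translate of $\l_{[0,c']}+t'$ and $w_{L,c}(\,\cdot\,;t)$ (or a cleaner variant obtained via a V-shape glued to $w_{L,c}$), balancing the parameters $c', t'$ so that both min and max remain in $\CV$ and so that the resulting identity expresses $\oZ(w_{L,c}(\,\cdot\,;t))$ as a combination of $\psi$-values evaluated at two arguments differing by approximately $L/c$. Equating the two expressions for $\oZ(w_{L,c}(\,\cdot\,;t))$, dividing by $L$, and letting $L\to 0^+$ (with $c$ fixed) produces the limit $\phi(t)=-\psi'(t)$, simultaneously forcing $\psi$ to be differentiable.

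For the induction step $k-1 \to k$, assuming the statement in dimension $k-1$, I would construct an auxiliary continuous translation invariant valuation $\widetilde\oZ$ on $\operatorname{Conv}(\R^{k-1})$ by applying $\oZ$ to functions of the form $u'(x_1,\ldots,x_{k-1}) + v(x_k)$ with $v$ a one-dimensional cone and then taking a finite-difference-type combination in the $x_k$-direction (the existence of the limit being guaranteed by the base case applied to the one-dimensional slice). The cone growth function of $\widetilde\oZ$ turns out to equal $-\psi'$ by the base case, and the induction hypothesis applied to $\widetilde\oZ$ combined with the tensor-product decomposition of the indicator $\Ind_{[0,1]^k}$ yields the formula $(-1)^k \psi^{(k)}(t)/k!$ after the combinatorial factor $1/k$ from the slicing is accounted for.

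The main obstacle is the base case: the natural candidate pairs (two cones with distinct slopes, or a cone crossing an indicator in the interior of a common domain) typically fail convexity of the min at an interior crossing point, so the valuation identity does not apply. The plan therefore depends on finding a pair exploiting the half-infinite domain of a cone together with the matching-value property at a glue point, so that the crossing in the \emph{interior} of the domain is avoided and both the min and the max belong to $\CV$; this is where the technical work is concentrated.
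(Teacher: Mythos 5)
Your base case stalls exactly at the step you yourself flag as ``where the technical work is concentrated,'' and that step is the whole content of the lemma for $k=1$. Everything up to the identity $\oZ(w_{L,c}(\cdot\,;t))=L\,\phi(t)+c\,\psi(t)$ is correct (though $\oZ(\Ind_{\{x_0\}}+t)=0$ already follows from \eqref{ind_simple}, since $\Ind_{\{x_0\}}$ is a translate of $\l_{\{0\}}$ and $V_1(\{0\})=0$), but the promised ``second computation'' of $\oZ(w_{L,c})$ is only described by desiderata, and in the form you ask for it does not come out of a single application of the valuation identity: the epigraph of $w_{L,c}$ has two vertices, $(0,t)$ and $(L,t)$, and the partners compatible with convexity of the minimum either reproduce your first identity (pairing with the full cone of slope $1/c$ through $(L,t)$) or reintroduce the unknown $\phi$ (pairing with the truncated cone $v(x)=x/c+t-L/c$ on $[0,L]$ gives $w_{L,c}\wedge v=\l_{[0,c]}+t-L/c$ but $w_{L,c}\vee v=\Ind_{[0,L]}+t$), so no identity expressing $\oZ(w_{L,c})$ purely through values of $\psi$ is produced. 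The object to which your ``glue at a point of equal values'' idea does apply is not $w_{L,c}$ but the truncated cone itself: pair $v^h=\l_{[0,1/h]}+\Ind_{[0,1]}+t$ (slope $h$ on $[0,1]$) with the translated and lifted full cone $\l_{[0,1/h]}(\cdot-1)+t+h$; their minimum is the full cone $\l_{[0,1/h]}+t$ and their maximum is $\Ind_{\{1\}}+t+h$, whence $\oZ(v^h+ \,\cdot\,)=\tfrac1h\big(\psi(t)-\psi(t+h)\big)$, and since $v^h\eto\Ind_{[0,1]}+t$ and $v^h-h\eto\Ind_{[0,1]}+t$ as $h\to0^+$, continuity gives both one-sided derivatives and $\phi(t)=-\psi'(t)$. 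This is precisely the paper's construction; once you have it, the scaffolding with $w_{L,c}$ and the additivity in $L$ becomes superfluous.

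Your induction step is organized differently from the paper's (the paper telescopes inside $\C{k}$, converting one cone direction at a time into an indicator direction), and it can be made to work, but not as stated: you cannot invoke the base case to guarantee existence of a finite-difference limit for a general $u'\in\C{k-1}$, because the sliced functional $v\mapsto\oZ\big(u'(x_1,\dots,x_{k-1})+v(x_k)\big)$ is known to satisfy hypothesis \eqref{ind_simple} only for special $u'$, e.g.\ $u'=\l_P+t$ with $P\in\cP_0^{k-1}$, where $\l_P(x')+\l_{[0,c]}(x_k)$ is the cone function of the pyramid $\conv\big(P\times\{0\},\,c\,e_k\big)$ of volume $c\,V_{k-1}(P)/k$ (this is the source of your factor $1/k$). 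The repair is to define $\widetilde\oZ(u'):=\oZ\big(u'(x_1,\dots,x_{k-1})+\Ind_{[0,1]}(x_k)\big)$ outright, verify that it is a continuous, translation invariant valuation on $\C{k-1}$ (continuity needs the small extra argument that epi-convergence is preserved under adding $\Ind_{[0,1]}(x_k)$), and use the base case only to evaluate it on cone functions, giving $\widetilde\oZ(\l_P+t)=-\tfrac1k\,\psi'(t)\,V_{k-1}(P)$ with $-\psi'/k$ continuous; the induction hypothesis then yields $\oZ(\Ind_{[0,1]^k}+t)=\tfrac{(-1)^k}{k!}\,\psi^{(k)}(t)$. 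So the induction step is salvageable and genuinely parallel to the paper's; the genuine gap is the missing base-case pairing described above.
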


\begin{proof}
To explain the idea of the proof, we first consider the case $k=1$. For $h>0$, let $u_h=\l_{[0,1/h]}$, that is,
$u^h(x)=+\infty$ for $x<0$ and $u^h(x) =h\, x$ for $x\ge0$. Define $v^h:\R\to [0,+\infty]$ by $v^h = u^h+ \Ind_{[0,1]}$. Since $\oZ$ is a translation invariant valuation and by (\ref{ind_simple}), we obtain 
$$\oZ(v^h+t)= \oZ(u^h+t)- \oZ(u^h+h +t)=\frac1 h\Big(\psi(t) - \psi(t+h)\Big)$$
for $t\in\R$. As $h\to 0$, the epi-limit of $v^h+t$ is $\Ind_{[0,1]}+t$. Since $\oZ$ is continuous, we thus obtain
$$\oZ(\Ind_{[0,1]}+t) = \lim_{h\to 0^+} \frac1 h\Big(\psi(t) - \psi(t+h)\Big)$$
for $t\in\R$. Hence $\psi$ is differentiable from the right at every $t\in\R$. Since $v^h+t-h \eto \Ind_{[0,1]}+t$ as $h\to 0$, we also obtain
$$\oZ(\Ind_{[0,1]}+t) = \lim_{h\to 0^+}\big(  \oZ(u^h+t-h)- \oZ(u^h+t)\big) =  \lim_{h\to 0^+} \frac1 {h}\Big(\psi(t- h) - \psi(t)\Big). $$
Hence $\psi$ is also differentiable from the left at every $t\in\R$ and $\oZ(\Ind_{[0,1]}+t)=- \psi'(t)$. This concludes the proof for $k=1$.
\goodbreak

Next, let $\{e_1,\ldots,e_k\}$ denote the standard basis of $\R^k$ and set $e_0=0$. For $h=(h_1,\ldots,h_k)$ with $0< h_1 \leq \cdots \leq h_k$ and $0\leq i < k$, define the function $u_i^h$  through its sublevel sets as
\begin{equation*}
\{u_i^h<0\}=\emptyset,\quad \{u_i^h \leq s\} = [0,e_0]+ \cdots + [0,e_i] + \conv\{0,s\, e_{i+1}/h_{i+1}, \ldots, s\, e_k /h_{k}\},
\end{equation*}
for every $s\geq 0$. Let $u_k^h=\Ind_{[0,1]^k}$. 
Note, that $u_i^h$ does not depend on $h_j$ for $0\leq j\leq i$. We use induction on $i$ to show that $u_i^h\in\C{k}$ and that
\begin{equation*}
\oZ(u_i^h+t) = \frac{(-1)^i}{k! \, h_{i+1}\cdots h_{k}} \psi^{(i)} (t),
\end{equation*}
for every $t\in\R$ and $0\leq i \leq k$, where $\psi^{(i)}(t) =\tfrac{\d^i}{\d t^i} \psi(t)$. 
\goodbreak

For $i=0$, set $P_h=\conv\{0,e_1/h_1,\ldots,e_k/h_k\}\in \cP_0^k$ and note that $u_0^h=\l_{P_h} \in\C{k}$. Hence, by the assumption on $\oZ$, we have
\begin{equation*}
\oZ(u_0^h+t) = \oZ(\l_{P_h}+t) = \psi(t) V_k(P_h) = \frac{1}{k!\, h_1\cdots h_k} \psi(t).
\end{equation*}
Now assume that the statement holds true for $i\geq 0$. Define the function $v_{i+1}^h$ by
\begin{equation*}
\{v_{i+1}^h\leq s\} = \{u_i^h\leq s\} \cap \{x_{i+1}\leq 1\},
\end{equation*}
for every $s\in\R$. Since $\epi v_{i+1}^h = \epi u_i^h \cap \{x_{i+1}\leq 1\}$, it is easy to see that $v_{i+1}^h\in\C{k}$. As $h_{i+1}\to 0$, we have epi-convergence of $v_{i+1}^h$ to $u_{i+1}^h$. Lemma \ref{thm:g_comp} implies that $u_{i+1}^h$ is a convex function and hence $u_{i+1}^h \in\C{k}$. Now, let $\tau_{i+1}$ be the translation $x\mapsto x+e_{i+1}$. Note that
\begin{equation*}
\{v_{i+1}^h \leq s\} \cup \{(u_i^h\circ \tau_{i+1}^{-1} + h_{i+1} )\leq s\} = \{u_i^h \leq s\},
\end{equation*} 
\begin{equation*}
\{v^h_{i+1} \leq s\} \cap \{(u_i^h\circ \tau_{i+1}^{-1} + h_{i+1} )\leq s\} \subset \{x_{i+1}=1\},
\end{equation*}
for every $s\in\R$. Since $\oZ$ is a continuous, translation invariant valuation and $\oZ(\l_P+t)=0$ for $P\in\cP_0^k$ with $\dim(P)<k$, Lemma \ref{le:reduction} and its proof imply that $\oZ$ vanishes on all functions $u\in\C{k}$ with $\dom u\subset H$, where $H$ is a hyperplane in $\R^k$. Hence, 
$$\oZ(v_{i+1}^h \vee (u_i^h\circ \tau_{i+1}^{-1} + h_{i+1}))=0.$$
Thus, by the valuation property
\begin{equation*}
\oZ(u_i^h+t) = \oZ((v_{i+1}^h+t) \wedge (u_i^h\circ \tau_{i+1}^{-1} + h_{i+1}+t)) = \oZ(v_{i+1}^h+t)+\oZ(u_i^h\circ \tau_{i+1}^{-1} + h_{i+1}+t).
\end{equation*}
Using the induction assumption and the translation invariance of $\oZ$, we obtain
\begin{equation*}
\oZ(v_{i+1}^h+t) = \frac{(-1)^{i+1}}{k! \, h_{i+2}\cdots h_k} \frac{\psi^{(i)}(t+h_{i+1})-\psi^{(i)}(t)}{h_{i+1}}.
\end{equation*}
As $h_{i+1}\to 0$, the continuity of $\oZ$ shows that
\begin{equation*}
\oZ(u_{i+1}^h+t) = \frac{(-1)^{i+1}}{k! \, h_{i+2}\cdots h_k}\lim_{h_{i+1}\to 0^+} \frac{\psi^{(i)}(t+h_{i+1})-\psi^{(i)}(t)}{h_{i+1}}.
\end{equation*}
Hence $\psi^{(i)}$ is differentiable from the right. Similarly, we have $v_{i+1}^h+t-h_{i+1} \eto u_{i+1}^h$ as $h_{i+1} \to 0$ and thus
\begin{equation*}
\oZ(u_{i+1}^h+t)=\lim_{h_{i+1}\to 0^+} \oZ(v_{i+1}^h+t-h_{i+1}) = \frac{(-1)^{i+1}}{k! \, h_{i+2}\cdots h_k} \lim_{h_{i+1}\to 0^+} \frac{\psi^{(i)}(t)-\psi^{(i)}(t-h_{i+1})}{h_{i+1}},
\end{equation*}
which shows that $\psi^{(i)}$ is differentiable from the left and therefore,
\begin{equation*}
\oZ(u_{i+1}^h+t) = \frac{(-1)^{i+1}}{k! \, h_{i+2}\cdots h_k} \psi^{(i+1)}(t),
\end{equation*}
for every $t\in\R$.
\end{proof}

\begin{lemma}
\label{le:sln_grwth_relation}
If $\,\oZ:\CV\to \R$ is a continuous, $\sln\!$ and translation invariant valuation, then the growth functions $\psi_0$ and $\zeta_0$ coincide and
\begin{equation*}
\zeta_n(t)=\frac{(-1)^n}{n!}\frac{\d^n}{\d t^n}\psi_n(t),
\end{equation*}
for every $t\in\R$.
\end{lemma}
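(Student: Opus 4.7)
The plan is to split the statement into the two assertions $\psi_0=\zeta_0$ and the $n$-th derivative identity for $\psi_n$, and to reduce the second one to Lemma~\ref{le:diff_grwth_fncts} by subtracting off the ``minimum'' contribution of $\oZ$. The first assertion should be essentially free: I would test the formulas from Lemma~\ref{le:sln_grwth} on the degenerate polytope $P=\{0\}\in\cP_0^n$. By the definition of $\ell_P$, the positive hull of $\{0\}\times\{1\}$ in $\R^{n+1}$ is $\{0\}\times[0,+\infty)$, so $\ell_{\{0\}}=\Ind_{\{0\}}$; combined with $V_n(\{0\})=0$ this forces $\psi_0(t)=\oZ(\ell_{\{0\}}+t)=\oZ(\Ind_{\{0\}}+t)=\zeta_0(t)$ for every $t\in\R$.

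For the derivative formula, I would pass to the auxiliary functional
\[\tilde\oZ(u)=\oZ(u)-\psi_0\bigl(\min\nolimits_{x\in\Rn}u(x)\bigr).\]
Lemma~\ref{le:min_is_a_val} says that the subtracted term is itself a continuous, $\sln$ and translation invariant valuation on $\CV$, and hence so is $\tilde\oZ$. Since $\min(\ell_P+t)=t$ for every $P\in\cP_0^n$ and $t\in\R$, Lemma~\ref{le:sln_grwth} reduces to $\tilde\oZ(\ell_P+t)=\psi_n(t)\,V_n(P)$, so $\tilde\oZ$ fits the hypothesis of Lemma~\ref{le:diff_grwth_fncts} with $k=n$ and $\psi=\psi_n$.

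Applying that lemma to the unit cube $[0,1]^n\in\cP_0^n$ then yields
\[\tilde\oZ(\Ind_{[0,1]^n}+t)=\frac{(-1)^n}{n!}\,\frac{\d^n}{\d t^n}\psi_n(t),\]
and in particular shows that $\psi_n$ is $n$-times differentiable. On the other hand, using the already established identity $\psi_0=\zeta_0$, the value $V_n([0,1]^n)=1$, and $\min(\Ind_{[0,1]^n}+t)=t$, the definition of $\tilde\oZ$ directly evaluates to $\tilde\oZ(\Ind_{[0,1]^n}+t)=\zeta_0(t)+\zeta_n(t)-\psi_0(t)=\zeta_n(t)$, and comparing the two expressions closes the argument. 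I do not expect a genuine obstacle here, since the computational content has already been carried out in Lemma~\ref{le:diff_grwth_fncts}; the only thing to watch is that subtracting $\psi_0(\min u)$ indeed preserves the valuation, $\sln$, translation, and continuity properties of $\oZ$, which is exactly what Lemma~\ref{le:min_is_a_val} supplies.
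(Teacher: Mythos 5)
Your proposal is correct and follows essentially the same route as the paper: identify $\psi_0=\zeta_0$ via $\ell_{\{0\}}=\Ind_{\{0\}}$, subtract the valuation $u\mapsto\zeta_0(\min u)$ supplied by Lemma~\ref{le:min_is_a_val}, and apply Lemma~\ref{le:diff_grwth_fncts} with $k=n$ to the resulting functional, evaluating at $\Ind_{[0,1]^n}+t$. The only cosmetic difference is that you subtract $\psi_0(\min u)$ while the paper subtracts $\zeta_0(\min u)$, which are the same function by the first step.
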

\begin{proof}
Since $\l_{\{0\}}=\Ind_{\{0\}}$,  Lemma \ref{le:sln_grwth} implies that
\begin{equation*}
\psi_0(t)=\oZ(\l_{\{0\}}+t)=\oZ(\Ind_{\{0\}}+t)=\zeta_0(t),
\end{equation*}
for every $t\in\R$.

Now define $\bar{\oZ}:\CV\to\R$ as
\begin{equation*}
\bar{\oZ}(u)=\oZ(u)-\zeta_0\big( \min\nolimits_{x\in\Rn} u(x)\big).
\end{equation*}
By Lemma  \ref{le:min_is_a_val}, the functional $\bar{\oZ}$ is a continuous, $\sln$ and translation invariant valuation that satisfies
\begin{equation*}
\bar{\oZ}(\l_P+t)=\psi_n(t)V_n(P)
\end{equation*}
and
\begin{equation*}
\bar{\oZ}(\Ind_P+t)=\zeta_n(t)V_n(P),
\end{equation*}
for every $P\in\cP_0^n$ and $t\in\R$. Hence, by Lemma \ref{le:diff_grwth_fncts},
\begin{equation*}
\zeta_n(t)= \zeta_n(t)V_n([0,1]^n) = \bar{\oZ}(\Ind_{[0,1]^n}+t)=\frac{(-1)^n}{n!}\frac{\d^n}{\d t^n}\psi_n(t),
\end{equation*}
for every $t\in\R$.
\end{proof}

\begin{lemma}
\label{le:sln_grwth_infty}
If $\,\oZ:\CV\to\R$ is  a continuous, $\sln$ and translation invariant valuation, then its cone growth function $\psi_n$ satisfies
\begin{equation*}
\lim_{t\to\infty} \psi_n(t)=0.
\end{equation*}
\end{lemma}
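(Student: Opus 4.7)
The aim is to show $\lim_{t\to\infty}\psi_n(t)=0$. I would proceed by a continuity argument along a degenerating $\sln$-orbit. Work with the reduced valuation $\bar\oZ(u):=\oZ(u)-\zeta_0(\min u)$, which by Lemmas \ref{le:min_is_a_val} and \ref{le:sln_grwth_relation} is continuous, $\sln$- and translation-invariant, and satisfies $\bar\oZ(\l_P+t)=\psi_n(t)V_n(P)$. Since $\bar\oZ(\l_P+t)=0$ whenever $\dim P<n$, the argument used inside the proof of Lemma \ref{le:diff_grwth_fncts}, combined with Lemma \ref{le:reduction}, shows that $\bar\oZ$ vanishes on every $u\in\CV$ whose domain lies in a hyperplane of $\Rn$.

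Fix $P\in\cP_0^n$ with $V_n(P)>0$ and let $\phi_\lambda\in\sln$ stretch $P$ into a thin slab as $\lambda\to\infty$, for instance $\phi_\lambda=\operatorname{diag}(\lambda,\lambda^{-1},1,\dots,1)$ (available since $n\geq 2$). By $\sln$-invariance, $\bar\oZ(\l_{\phi_\lambda P}+t)=\psi_n(t)V_n(P)$ for every $\lambda$. Although the cones $\l_{\phi_\lambda P}+t$ themselves have no epi-limit in $\CV$, after truncating by $\Ind_K$ for a fixed compact polytope $K\in\cK_0^n$ the functions $v_\lambda=(\l_{\phi_\lambda P}+t)\vee(\Ind_K+t)\in\CV$ epi-converge to $v_\infty=\Ind_{K\cap\{x_2=0\}}+t$, whose domain lies in a hyperplane. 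Therefore $\bar\oZ(v_\infty)=0$ and, by continuity, $\oZ(v_\lambda)\to\psi_0(t)$.

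It remains to relate $\oZ(v_\lambda)$ to $\oZ(\l_{\phi_\lambda P}+t)=\psi_0(t)+\psi_n(t)V_n(P)$. Directly applying the valuation identity $\oZ(v_\lambda)+\oZ((\l_{\phi_\lambda P}+t)\wedge(\Ind_K+t))=\oZ(\l_{\phi_\lambda P}+t)+\oZ(\Ind_K+t)$ requires the wedge to lie in $\CV$, which typically fails since the pointwise minimum of a cone and an indicator is generally not convex; this is the main obstacle. I would overcome it either by decomposing $v_\lambda$ into piecewise-affine pieces via Lemma \ref{le:reduction}, applying the identity only to pairs whose wedge and join both stay in $\CV$, or by choosing $K$ adapted to $\phi_\lambda P$ so that the wedge simplifies to an element of $\CV$. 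Once the identity holds, letting $\lambda\to\infty$ and combining with a coupled limit $t\to\infty$ (tuned so that the arising epi-limit remains in $\CV$) forces $\psi_n(t)V_n(P)\to 0$, yielding $\lim_{t\to\infty}\psi_n(t)=0$.
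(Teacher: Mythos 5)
Your proposal does not reach the conclusion: the two steps you flag as "to be overcome" are in fact the entire content of the lemma, and the mechanism you describe cannot produce information about $\psi_n(t)$ for large $t$. First, the valuation identity you would need is not available: the wedge $(\l_{\phi_\lambda P}+t)\wedge(\Ind_K+t)$ is indeed not convex in general, and neither of your suggested repairs is carried out (decomposing via Lemma~\ref{le:reduction} only compares two valuations that already agree on cone functions; it does not split a single function into admissible pieces). Second, and more fundamentally, in your scheme $t$ is fixed while $\lambda\to\infty$, so even if an identity of the form $\oZ(v_\lambda)+\oZ(\text{wedge}_\lambda)=\oZ(\l_{\phi_\lambda P}+t)+\oZ(\Ind_K+t)$ were valid, no conclusion about $\lim_{t\to\infty}\psi_n(t)$ could follow: for the integral valuations $\oZ_\zeta$ the quantity $\bar\oZ(\l_{\phi_\lambda P}+t)$ is constant in $\lambda$ while $\bar\oZ(v_\lambda)$ decays, and the difference is exactly absorbed by the (missing) wedge term, so nothing forces $\psi_n(t)V_n(P)$ to be small. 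The "coupled limit $t\to\infty$ tuned so that the epi-limit remains in $\CV$" is precisely the missing idea, not a routine detail. (A minor further slip: the epi-limit of your truncated functions is not $\Ind_{K\cap\{x_2=0\}}+t$ but a cone function of a section of $P$ restricted to $K\cap\{x_2=0\}$; this does not hurt the vanishing claim, since the domain still lies in a hyperplane, but it shows the degeneration is not doing what you expect.)

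The paper's proof uses no $\sln$-degeneration at all; the key is a self-similar splitting of a single cone function by a hyperplane, arranged so that both the wedge and the join stay in $\CV$ and so that the parameter of the splitting enters as the argument of $\psi_n$. Concretely, take $P=\conv\{0,\tfrac{e_1+e_2}{2},e_2,\dots,e_n\}$, truncate $\l_P$ by the half-space $\{x_1\le\tfrac s2\}$ to get $u_s$, and observe that the remaining part of the cone is exactly a translate of $\l_P+s$ (call it $\l_{P,s}$), because $\tfrac{e_1+e_2}{2}$ is a vertex of $P$. Then $u_s\wedge\l_{P,s}=\l_P$ and $u_s\vee\l_{P,s}=\l_{Q,s}$ with $Q$ lower dimensional, so all four functions lie in $\CV$ and the valuation identity gives $\oZ(u_s)=\psi_n(0)V_n(P)+\psi_0(0)-\psi_n(s)V_n(P)$ via Lemma~\ref{le:sln_grwth}. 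Since $u_s\eto\l_P$ as $s\to\infty$, continuity forces $\psi_n(s)V_n(P)\to0$. Note how the translation by $s\,\tfrac{e_1+e_2}{2}$ simultaneously raises the cone by $s$, which is how $\psi_n$ at a large argument appears; your construction has no analogue of this, which is why it stalls.
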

\begin{proof}
Let $\{e_1,e_2,\ldots,e_n\}$ be the standard basis of $\Rn$ and let
\begin{equation*}
P=\conv\{0,\tfrac{e_1+e_2}{2},e_2,e_3,\ldots,e_n\},\qquad Q=\conv\{0,e_2,e_3,\ldots,e_n\}.
\end{equation*}
For $s>0$, define $u_s\in\CV$ by its epigraph as $\epi u_s=\epi \l_P \cap \{x_1\leq \tfrac{s}{2}\}$. Note, that for $t\geq 0$ this gives $\{u_s\leq t\} = t P\cap \{x_1 \leq \tfrac{s}{2}\}$. Let $\tau_s$ be the translation $x\mapsto x+s\,\tfrac{e_1+e_2}{2}$ and define $\l_{P,s}(x)=\l_P(x)\circ \tau_s^{-1}+s$ and similarly $\l_{Q,s}(x)=\l_Q(x)\circ \tau_s^{-1}+s$. We will now show that
\begin{equation*}
u_s \wedge \l_{P,s} = \l_P \qquad u_s \vee \l_{P,s} = \l_{Q,s},
\end{equation*}
or equivalently
\begin{equation*}
\epi u_s \cup \epi \l_{P,s} = \epi \l_P \qquad \epi u_s \cap \epi \l_{P,s} = \epi \l_{Q,s},
\end{equation*}
which is the same as
\begin{equation}
\{u_s\leq t\} \cup \{\l_{P,s}\leq t\} = \{\l_P\leq t\} \qquad \{u_s\leq t\} \cap \{\l_{P,s}\leq t\} = \{\l_{Q,s}\leq t\}
\label{eq:un_in_sublvl}
\end{equation}
for every $t\in\R$. Indeed, it is easy to see, that (\ref{eq:un_in_sublvl}) holds for all $t<s$. Therefore, fix an arbitrary $t\geq s$. We  have
\begin{equation*}
\{\l_{P,s}\leq t\} = \{\l_{P}+s\leq t\}+ s\,\tfrac{e_1+e_2}{2}=(t-s) P + s\,\tfrac{e_1+e_2}{2}. 
\end{equation*}
This can be rewritten as
\begin{equation*}
\{\l_{P,s}\leq t\} = tP\cap\{x_1\geq \tfrac{s}{2}\}.
\end{equation*}
Hence
\begin{equation*}
\{u_s\leq t\}\cup \{\l_{P,s}\leq t\} = \big(tP\cap \{x_1 \leq \tfrac{s}{2}\}\big) \cup \big(tP\cap\{x_1\geq \tfrac{s}{2}\}\big) = tP = \{\l_P\leq t\},
\end{equation*}
and
\begin{align*}
\{u_s\leq t\}\cap \{\l_{P,s}\leq t\} &= tP\cap\{x_1=\tfrac{s}{2}\} = \big((t-s) P \cap \{x_1=0\}\big)+s\,\tfrac{e_1+e_2}{2}\\[4pt]
&=(t-s)\, Q +s\,\tfrac{e_1+e_2}{2}=\{\l_{Q}+s\leq t\} +s\,\tfrac{e_1+e_2}{2}=\{\l_{Q,s}\leq t\}.
\end{align*}
From the valuation property of $\oZ$ we now get
\begin{equation*}
\oZ(u_s)+\oZ(\l_{P,s}) = \oZ(\l_P)+\oZ(\l_{Q,s}).
\end{equation*}
By Lemma \ref{le:sln_grwth} and since $V_n(Q)=0$, we have
\begin{equation*}
\oZ(u_s)+\psi_n(s)V_n(P)+\psi_0(s) = \psi_n(0)V_n(P)+\psi_0(0)+\psi_0(s).
\end{equation*}
As $s\to\infty$, we obtain  $u_s \eto \l_P$ and therefore
\begin{multline*}
\psi_n(0)V_n(P)+\psi_0(0)-\psi_n(s)V_n(P)= \oZ(u_s)
\xrightarrow{s\to\infty} \quad \oZ(\l_P) = \psi_n(0)V_n(P)+\psi_0(0).
\end{multline*}
Since $V_n(P)>0$, this shows that $\psi_n(s)\to 0$.
\end{proof}

Lemma \ref{le:sln_grwth_relation} shows that for a continuous, $\sln$ and trans\-lation invariant valuation $\oZ$ the indicator growth functions $\zeta_0$ and $\zeta_n$ coincide with its cone growth function $\psi_0$  and  up to a constant factor with  the $n$-th derivative of its cone growth function $\psi_n$, respectively.   Since  Lemma \ref{le:sln_grwth_infty} shows that $\lim_{t\to\infty} \psi_n(t)=0$, the cone growth functions $\psi_0$ and $\psi_n$ are completely determined by the indicator growth functions of $\oZ$. Hence  Lemma \ref{le:sln_val_determined_by_cone_grwth_functions}  immediately implies the following result.

\begin{lemma}
\label{le:reduction2}
Every continuous, $\sln$ and translation invariant valuation $\oZ:\CV\to\R$ is uniquely determined by its indicator growth functions.
\end{lemma}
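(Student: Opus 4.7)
The statement follows by direct combination of the three immediately preceding lemmas, so my plan is essentially bookkeeping: take two continuous, $\sln$ and translation invariant valuations $\oZ_1,\oZ_2:\CV\to\R$ with the same indicator growth functions $\zeta_0,\zeta_n$, and aim to reduce to Lemma \ref{le:sln_val_determined_by_cone_grwth_functions} by showing that the corresponding cone growth functions $\psi_0,\psi_n$ (of $\oZ_1$) and $\bar\psi_0,\bar\psi_n$ (of $\oZ_2$) must also coincide.

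The $0$-component is handled on the spot: Lemma \ref{le:sln_grwth_relation} identifies both $\psi_0$ and $\bar\psi_0$ with the common indicator growth function $\zeta_0$. For the $n$-component, my plan is to exploit the differential relation
\begin{equation*}
\tfrac{\d^n}{\d t^n}\psi_n(t) \,=\, (-1)^n n!\,\zeta_n(t) \,=\, \tfrac{\d^n}{\d t^n}\bar\psi_n(t)
\end{equation*}
from Lemma \ref{le:sln_grwth_relation} (where the $n$-fold differentiability is guaranteed by Lemma \ref{le:diff_grwth_fncts}). This forces the difference $p := \psi_n - \bar\psi_n$ to satisfy $p^{(n)} \equiv 0$, so that $p$ is a polynomial of degree at most $n-1$.

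The last ingredient is Lemma \ref{le:sln_grwth_infty}, which gives $\psi_n(t), \bar\psi_n(t) \to 0$ as $t \to \infty$; hence $p(t) \to 0$ as well, and a polynomial tending to zero at infinity must vanish identically. Thus $\psi_n \equiv \bar\psi_n$, and a final appeal to Lemma \ref{le:sln_val_determined_by_cone_grwth_functions} yields $\oZ_1 \equiv \oZ_2$. I do not anticipate any genuine obstacle: the substantive content has been packaged into the three preceding lemmas, and the only elementary step is the polynomial-at-infinity argument, which is standard.
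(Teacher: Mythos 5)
Your proposal is correct and follows essentially the same route as the paper: identify $\psi_0=\zeta_0$ and recover $\psi_n$ from $\zeta_n$ via Lemma \ref{le:sln_grwth_relation} together with the decay $\psi_n(t)\to 0$ from Lemma \ref{le:sln_grwth_infty}, then conclude by Lemma \ref{le:sln_val_determined_by_cone_grwth_functions}. The only difference is that you spell out the elementary step (the difference of the two cone growth functions is a polynomial of degree at most $n-1$ tending to zero, hence identically zero) that the paper leaves implicit.
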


We also require the following result.

\begin{lemma}
\label{le:derivative_has_finite_moment}
Let $\zeta\in C(\R)$  have constant sign on $[t_0,\infty)$ for some $t_0\in\R$. If  there exist $n\in\N$, $c_n\in\R$ and $\psi\in C^n(\R)$  with $\lim_{t\to+\infty} \psi(t)=0$ such that 
$$\zeta(t) = c_n \,\frac{\d^n}{\d t^n}\psi(t)$$ 
for $t\ge t_0$,  then
\begin{equation*}
\Big| \int_{0}^{+\infty} t^{n-1} \zeta(t) \d t\Big| < +\infty.
\end{equation*}
\end{lemma}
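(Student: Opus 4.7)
The plan is to reduce the claim to the convergence of $c_n\int_{t_0}^{\infty} t^{n-1}\psi^{(n)}(t)\d t$---the remaining piece $\int_0^{t_0} t^{n-1}\zeta(t)\d t$ is finite by continuity of the integrand on the compact interval $[0,t_0]$---and to evaluate the improper tail by iterated integration by parts. Since $\zeta$ has constant sign on $[t_0,\infty)$, convergence of the tail and its absolute convergence coincide, so after replacing $\psi$ by $-\psi$ if necessary I may assume $\psi^{(n)}\ge 0$ on $[t_0,\infty)$. The entire task is then to show that the boundary terms at $+\infty$ arising from the integration by parts vanish.

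First I would establish that each of $\psi,\psi',\ldots,\psi^{(n-1)}$ is monotone on $[t_0,\infty)$ with $\lim_{t\to+\infty}\psi^{(j)}(t)=0$; equivalently, $|\psi^{(j)}|$ is non-increasing there. Since $\psi^{(n)}\ge 0$, the function $\psi^{(n-1)}$ is non-decreasing and admits a limit $L\in(-\infty,+\infty]$. If $L\neq 0$, then $|\psi^{(n-1)}|$ is eventually bounded below by a positive constant, which after $n-1$ successive integrations forces $|\psi(t)|\to+\infty$, contradicting $\psi(t)\to 0$. Hence $L=0$, and $\psi^{(n-1)}$, being non-decreasing with limit $0$, is non-positive on $[t_0,\infty)$. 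Iterating the same reasoning with $\psi^{(n-1)},\psi^{(n-2)},\ldots$ in place of $\psi^{(n)}$ gives the claim, with the signs of the $\psi^{(n-k)}$ alternating.

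The core step is to prove $t^k|\psi^{(k)}(t)|\to 0$ as $t\to+\infty$ for every $k=0,1,\ldots,n-1$. Using that $|\psi^{(j)}|$ is non-increasing on $[t_0,\infty)$ and that $\psi^{(j)}$ has constant sign there, I obtain for $j\ge 1$ and $t\ge 2t_0$ the elementary estimate
\begin{equation*}
\tfrac{t}{2}\,|\psi^{(j)}(t)|\le\int_{t/2}^{t}|\psi^{(j)}(s)|\d s = |\psi^{(j-1)}(t)-\psi^{(j-1)}(t/2)| \le |\psi^{(j-1)}(t)|+|\psi^{(j-1)}(t/2)|.
\end{equation*}
Multiplying by $t^{k-1}$ and noting that $t^{k-1}|\psi^{(j-1)}(t/2)|=2^{k-1}(t/2)^{k-1}|\psi^{(j-1)}(t/2)|$, this shows that $t^{k-1}|\psi^{(j-1)}(t)|\to 0$ implies $t^k|\psi^{(j)}(t)|\to 0$. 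Chaining from the base case $|\psi|\to 0$ through $(k,j)=(1,1),(2,2),\ldots,(n-1,n-1)$ yields the decay for every $k\le n-1$.

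Finally, $n-1$ integrations by parts on $[t_0,T]$ give
\begin{equation*}
\int_{t_0}^{T} t^{n-1}\psi^{(n)}(t)\d t = \sum_{j=0}^{n-1}\frac{(-1)^j\,(n-1)!}{(n-1-j)!}\,\Big[t^{n-1-j}\psi^{(n-1-j)}(t)\Big]_{t_0}^{T},
\end{equation*}
and by the decay just established (together with the hypothesis $\psi(T)\to 0$, which handles the $j=n-1$ term) every boundary contribution at $T$ vanishes as $T\to+\infty$. Hence the tail integral converges to a finite limit. The main obstacle is the polynomial decay step: this is the only place where the vanishing of $\psi$ at infinity has to be propagated through all $n$ derivatives, and it relies essentially on the alternating monotonicity established just before.
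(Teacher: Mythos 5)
Your proof is correct, but it takes a genuinely different route from the paper's. The paper argues by induction on $n$: applying the statement for $n-1$ to the pair $(\psi^{(n-1)},\psi)$ shows that $\int t^{n-2}\psi^{(n-1)}\d t$ is finite, which simultaneously forces $\lim_{t\to\infty}\psi^{(n-1)}(t)=0$ (hence $\psi^{(n-1)}\le 0$ eventually, since it is non-decreasing) and controls the integral arising after a \emph{single} integration by parts; the sign of the boundary term $t^{n-1}\psi^{(n-1)}(t)$ then rules out $+\infty$. You instead avoid induction entirely: you first propagate the decay and alternating monotonicity through all intermediate derivatives, establish the explicit polynomial decay $t^{k}\,|\psi^{(k)}(t)|\to 0$ for $k\le n-1$ via the clean doubling estimate $\tfrac{t}{2}|\psi^{(j)}(t)|\le|\psi^{(j-1)}(t)|+|\psi^{(j-1)}(t/2)|$, and then perform all $n-1$ integrations by parts at once, so that every boundary term at infinity vanishes outright. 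Your argument buys a stronger and more transparent statement (explicit vanishing of each $t^{k}\psi^{(k)}(t)$, not merely boundedness of the top one), at the cost of having to organize the alternating-sign bookkeeping by hand; the paper's induction is more economical because a single application of the inductive hypothesis packages both the sign control and the convergence of the remaining integral. One small point to make explicit when writing this up: the equality $\int_{t/2}^{t}|\psi^{(j)}(s)|\d s=|\psi^{(j-1)}(t)-\psi^{(j-1)}(t/2)|$ requires $\psi^{(j)}$ to have constant sign on $[t/2,t]$, so the estimate should be stated for $t\ge 2\max\{t_0,0\}$ (or after shifting $t_0$ to a positive value), which is harmless since only the limit $t\to+\infty$ matters.
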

\begin{proof}
Since we can always consider $\widetilde{\psi}(t) = \pm c_n\, \psi(t)$ instead of $\psi(t)$, we assume that $c_n=1$ and $\zeta \geq 0$.
To prove the statement, we use induction on $n$ and start with the case $n=1$. For $t_1>t_0$,
\begin{equation*}
\int_{t_0}^{t_1} \zeta(t) \d t = \int_{t_0}^{t_1} \psi' (t) \d t = \psi(t_1) - \psi(t_0).
\end{equation*}
 Hence, it follows from the assumption for $\psi$ that
\begin{equation*}
\int_{t_0}^{+\infty} \zeta(t) \d t   = 
 \lim_{t_1\to+\infty} \psi(t_1) - \psi(t_0) = -\psi(t_0)<+\infty.
\end{equation*}
This proves the statement for $n=1$.

Let $n\geq 2$ and assume that the statement holds true for $n-1$. Since $\zeta \geq 0$, the function $\psi^{(n-1)}$ is increasing. Therefore, the limit
\begin{equation*}
c=\lim_{t\to+\infty} \psi^{(n-1)}(t) \in (-\infty,+\infty]
\end{equation*}
exists.
Moreover, $\psi^{(n-1)}$ has constant sign on $[\bar t_0,+\infty)$ for some $\bar t_0 \geq t_0$. By the induction hypothesis,
\begin{equation*}
\left| \int_{0}^{+\infty} t^{n-2} \psi^{(n-1)} (t) \d t\right | < +\infty,
\end{equation*}
which is only possible if $c=0$. In particular, $\psi^{(n-1)}(t) \leq 0$ for all $t \geq \bar t_0$.

Using integration by parts, we obtain
\begin{equation}\label{parts}
\int_{t_0}^{t_1} t^{n-1} \psi^{(n)}(t) \d t = t_1^{n-1}\psi^{(n-1)}(t_1) - t_0^{n-1}\psi^{(n-1)}(t_0)-(n-1)\int_{t_0}^{t_1} t^{n-2} \psi^{(n-1)}(t) \d t.
\end{equation}
Since $t^{n-1} \psi^{(n)}(t)\geq 0$ for $t\geq \max\{0,t_0\}$, we have
\begin{equation*}
d= \int_{t_0}^{+\infty} t^{n-1} \psi^{(n)}(t) \d t \in (-\infty,+\infty].
\end{equation*}
Hence, (\ref{parts}) implies that $t_1^{n-1} \psi^{(n-1)}(t_1)$ converges to
\begin{equation*}
d+t_0^{n-1}\psi^{(n-1)}(t_0)+(n-1)\int_{t_0}^{+\infty} t^{(n-2)} \psi^{(n-1)}(t) \d t.
\end{equation*}
Since $t_1^{n-1} \psi^{(n-1)}(t_1) \leq 0$ for $t_1\geq \max\{\bar t_0,0\}$, we conclude that $d$ is not $+\infty$.
\end{proof}

\goodbreak

\section{Proof of the Theorem}

If  $\zeta_0: \R \to [0,\infty)$ is continuous  and  $\zeta_n: \R \to [0,\infty)$ is continuous with finite $(n-1)$-st moment, then
 Lemmas \ref{le:min_is_a_val} and \ref{le:muf_is_a_val} show that
$$u\mapsto \zeta_0\big(\min\nolimits_{x\in\Rn}u(x)\big) + \int_{\dom u} \zeta_n\big(u(x)\big)\,dx$$
defines a non-negative, continuous,  $\sln$ and translation invariant valuation on $\CV$.

\goodbreak
Conversely, let $\oZ:\CV\to [0,\infty)$ be a  continuous,  $\sln$ and translation invariant valuation on $\CV$ with indicator growth functions $\zeta_0$ and $\zeta_n$. 
For  a polytope $P\in\cP_0^n$ with $\dim P < n$, Lemma \ref{le:sln_grwth} implies that
\begin{equation*}
0\leq \oZ(\Ind_P+t)=\zeta_0(t)
\end{equation*}
for every $t\in\R$. Hence, $\zeta_0$ is a non-negative and continuous function. Similarly, for $Q\in\cP_0^n$ with $V_n(Q)>0$, we have 
\begin{equation*}
0 \leq \oZ(\Ind_{s Q}+t)=\zeta_0(t)+s^n \zeta_n(t) V_n(Q),
\end{equation*}
for every $t\in\R$ and $s >0$. Therefore, also $\zeta_n$ is a non-negative and continuous function. By Lemmas \ref{le:sln_grwth_relation},  \ref{le:sln_grwth_infty} and \ref{le:derivative_has_finite_moment}, the growth function $\zeta_n$ has finite $(n-1)$-st moment.  Finally, for $u=\Ind_P+t$ with  $P\in\cP_0^n$ and $t\in\R$, we obtain 
that
$$
\oZ(u) =\zeta_0 (t) +\zeta_n V_n(P) =\zeta_0(\min\nolimits_{x\in\R^n} u(x)) + \int_{\dom u} \zeta_n(u(x)) \d x.
$$
By the first part of the proof, 
$$u\mapsto \zeta_0(\min\nolimits_{x\in\R^n} u(x)) + \int_{\dom u} \zeta_n(u(x)) \d x$$
defines a non-negative, continuous, $\sln$ and translation invariant valuation on $\CV$. Thus Lemma \ref{le:reduction2} completes the proof of the theorem.

\subsection*{Acknowledgments}
The work of Monika Ludwig and Fabian Mussnig was supported, in part, by Austrian Science Fund (FWF) Project P25515-N25.  The work of Andrea Colesanti 
was supported by the G.N.A.M.P.A. and by the F.I.R. project 2013: Geometrical and Qualitative Aspects of PDE's.


\begin{thebibliography}{10}

\bibitem{AbardiaWannerer}
J.~Abardia and T.~Wannerer, {\em Aleksandrov-{F}enchel inequalities for unitary
  valuations of degree 2 and 3}, Calc. Var. Partial Differential Equations {\bf
  54} (2015), 1767--1791.

\bibitem{Alesker99}
S.~Alesker, {\em Continuous rotation invariant valuations on convex sets}, Ann.
  of Math. (2) {\bf 149} (1999), 977--1005.

\bibitem{Alesker01}
S.~Alesker, {\em Description of translation invariant valuations on convex sets
  with solution of {P}.~{{M}c{M}ullen}'s conjecture}, Geom. Funct. Anal. {\bf 11}
  (2001), 244--272.

\bibitem{Attouch}
H.~Attouch, {\em Variational {C}onvergence for {F}unctions and {O}perators},
  Applicable Mathematics Series, Pitman (Advanced Publishing Program), Boston,
  MA, 1984.

\bibitem{BaryshnikovGhristWright}
Y.~Baryshnikov, R.~Ghrist, and M.~Wright, {\em Hadwiger's {T}heorem for
  definable functions}, Adv. Math. {\bf 245} (2013), 573--586.

\bibitem{Bernig:Fu}
A.~Bernig and J.~H.~G. Fu, {\em Hermitian integral geometry}, Ann. of Math. (2)
  {\bf 173} (2011), 907--945.

\bibitem{BobkovColesantiFragala}
S.~G. Bobkov, A.~Colesanti, and I.~Fragal{\`a}, {\em Quermassintegrals of
  quasi-concave functions and generalized {P}r\'ekopa-{L}eindler inequalities},
  Manuscripta Math. {\bf 143} (2014), 131--169.

\bibitem{CavallinaColesanti}
L.~Cavallina and A.~Colesanti, {\em Monotone valuations on the space of convex
  functions}, Anal. Geom. Metr. Spaces {\bf 3} (2015), 167--211.

\bibitem{ColesantiFragala}
A.~Colesanti and I.~Fragal{\`a}, {\em The first variation of the total mass of
  log-concave functions and related inequalities}, Adv. Math. {\bf 244} (2013),
  708--749.

\bibitem{ColesantiLombardi}
A.~Colesanti and N.~Lombardi, {\em Valuations on the space of quasi-concave
  functions}, Preprint (2016).

\bibitem{dal_maso}
G.~Dal~Maso, {\em An introduction to {$\Gamma$}-convergence}, Progress in
  Nonlinear Differential Equations and their Applications, 8, Birkh\"auser
  Boston, Inc., Boston, MA, 1993.

\bibitem{Haberl_sln}
C.~Haberl, {\em Minkowski valuations intertwining with the special linear
  group}, J. Eur. Math. Soc. (JEMS) {\bf 14} (2012), 1565--1597.

\bibitem{Haberl:Parapatits_centro}
C.~Haberl and L.~Parapatits, {\em The centro-affine {H}adwiger theorem}, J.
  Amer. Math. Soc. {\bf 27} (2014), 685--705.

\bibitem{HaberlParapatits_moments}
C.~Haberl and L.~Parapatits, {\em Moments and valuations}, Amer. J. Math. {\bf 138} (2016), 1575-1603.


\bibitem{Hadwiger:V}
H.~Hadwiger, {\em {V}or\-lesungen \"uber {I}nhalt, {O}ber\-fl\"ache und
  {I}so\-peri\-metrie}, Springer, Berlin, 1957.

\bibitem{HLYZ_acta}
Y.~Huang, E.~Lutwak, D.~Yang, and G.~Zhang, {\em Geometric measures in the dual
  {B}runn--{M}inkowski theory and their associated {M}inkowski problems}, Acta
  Math. {\bf 216} (2016), 325--388.

\bibitem{Klain:Rota}
D.~A. Klain and G.-C. Rota, {\em Introduction to {G}eometric {P}robability},
  Cambridge University Press, Cambridge, 1997.

\bibitem{KlartagMilman2005}
B.~Klartag and V.~D. Milman, {\em Geometry of log-concave functions and
  measures}, Geom. Dedicata {\bf 112} (2005), 169--182.

\bibitem{Kone}
H.~Kone, {\em Valuations on {O}rlicz spaces and {$L\sp \phi$}-star sets}, Adv.
  in Appl. Math. {\bf 52} (2014), 82--98.

\bibitem{LiMa}
J.~Li and D.~Ma, {\em Laplace transforms and valuations}, J. Funct. Anal. {\bf
  272} (2017), 738--758.

\bibitem{Ludwig:Fisher}
M.~Ludwig, {\em Fisher information and valuations}, Adv. Math. {\bf 226}
  (2011), 2700--2711.

\bibitem{Ludwig:survey}
M.~Ludwig, {\em Valuations on function spaces}, Adv. Geom. {\bf 11} (2011),
  745--756.

\bibitem{Ludwig:SobVal}
M.~Ludwig, {\em Valuations on {S}obolev spaces}, Amer. J. Math. {\bf 134}
  (2012), 827--842.

\bibitem{Ludwig:MM}
M.~Ludwig, {\em Covariance matrices and valuations}, Adv. in Appl. Math. {\bf
  51} (2013), 359--366.

\bibitem{Ludwig:Reitzner2}
M.~Ludwig and M.~Reitzner, {\em A classification of \,{${\rm SL}(n)$} invariant
  valuations}, Ann. of Math. (2) {\bf 172} (2010), 1219--1267.

\bibitem{LudwigReitzner_AP}
M.~Ludwig and M.~Reitzner, {\em S{L}$(n)$ invariant valuations on polytopes},
  Discrete Comput. Geom. (in press).

\bibitem{Ma2016}
D.~Ma, {\em Real-valued valuations on {S}obolev spaces}, Sci. China Math. {\bf
  59} (2016), 921--934.

\bibitem{MilmanRotem2013}
V.~Milman and L.~Rotem, {\em Mixed integrals and related inequalities}, J.
  Funct. Anal. {\bf 264} (2013), 570--604.

\bibitem{Ober2014}
M.~Ober, {\em {$L\sb p$}-{M}inkowski valuations on {$L\sp q$}-spaces}, J. Math.
  Anal. Appl. {\bf 414} (2014), 68--87.

\bibitem{RockafellarWets}
R.~T. Rockafellar and R.~J.-B. Wets, {\em Variational {A}nalysis}, Grundlehren
  der Mathe\-matischen Wissen\-schaften, vol. 317, Springer-Verlag, Berlin,
  1998.

\bibitem{Schneider:CB2}
R.~Schneider, {\em Convex {B}odies: the {B}runn-{M}inkowski {T}heory}, {S}econd
  expanded ed., Encyclopedia of Mathe\-matics and its Applications, vol. 151,
  Cambridge University Press, Cambridge, 2014.

\bibitem{Tsang:Lp}
A.~Tsang, {\em Valuations on ${L}^p$ spaces}, Int. Math. Res. Not. {\bf 20}
  (2010), 3993--4023.

\bibitem{Tsang:Minkowski}
A.~Tsang, {\em Minkowski valuations on ${L}^p$-spaces}, Trans. Amer. Math. Soc.
  {\bf 364} (2012), 6159--6186.

\bibitem{Tuo_Wang_semi}
T.~Wang, {\em Semi-valuations on {${\rm BV}(\mathbb R\sp n)$}}, Indiana Univ.
  Math. J. {\bf 63} (2014), 1447--1465.

\end{thebibliography}


\bigskip\bigskip\footnotesize
\parindent 0pt

\parbox[t]{8.5cm}{Andrea Colesanti\\
Dipartimento di Matematica e Informatica \lq\lq U. Dini\rq\rq\\
Universit\`a degli Studi di Firenze \\
Viale Morgagni 67/A\\
50134, Firenze, Italy\\
e-mail:  colesant@math.unifi.it
}
\parbox[t]{8.5cm}{Monika Ludwig\\
Institut f\"ur Diskrete Mathematik und Geometrie\\
Technische Universit\"at Wien\\
Wiedner Hauptstra\ss e 8-10/1046\\
1040 Wien, Austria\\
e-mail: monika.ludwig@tuwien.ac.at}

\bigskip\smallskip

\parbox[t]{8.5cm}{
Fabian Mussnig\\
Institut f\"ur Diskrete Mathematik und Geometrie\\
Technische Universit\"at Wien\\
Wiedner Hauptstra\ss e 8-10/1046\\
1040 Wien, Austria\\
e-mail: fabian.mussnig@tuwien.ac.at}

\end{document}